\titleformat*{\section}{\normalsize\bfseries}
\titleformat*{\subsection}{\small\bfseries}
\titleformat*{\subsubsection}{\normalize\bfseries}
\titleformat*{\paragraph}{\normalize\bfseries}
\titleformat*{\subparagraph}{\normalize\bfseries}
\newtheorem{theorem}{Theorem}[section]
\newtheorem{lemma}[theorem]{Lemma}
\theoremstyle{definition}
\newtheorem{definition}[theorem]{Definition}
\theoremstyle{remark}
\newtheorem{remark}[theorem]{Remark}
\newcommand{\Z}{\mathbb{Z}}
\DeclareMathAlphabet{\mathpzc}{OT1}{pzc}{m}{it}
\definecolor{byzantine}{rgb}{0.74, 0.2, 0.64}
\title{Floer Homology:  From Generalized Morse-Smale Dynamical Systems to Forman's Combinatorial Vector Fields }
\author{Marzieh Eidi $^{1, *}$ \and J\"urgen Jost $^{1, 2}$}
\date{%
    $^1$ Max-Planck Institut for Mathematics in the Sciences, Leipzig, Germany\\%
    $^2$ Santa Fe Institute, Santa Fe, New Mexico, USA\\[2ex]%\\
        $^*$ meidi@mis.mpg.de}
\begin{document}
\maketitle
\begin{center}
    \today
\end{center}
\begin{abstract}
We construct a  Floer type boundary operator for generalised Morse-Smale dynamical systems on compact smooth manifolds by counting the number of suitable flow lines between closed (both homoclinic and periodic) orbits and isolated critical points.  The same principle works for the discrete situation of general combinatorial vector fields, defined by Forman, on CW complexes.  We can thus recover the $\mathbb{Z}_2$ homology of both smooth and discrete structures directly from the flow lines (V-paths) of our vector field. 

\end{abstract}

\section{Introduction}
One of the key ideas of modern geometry is to extract topological information
about an object from a dynamical process operating on that object. For that
purpose, one needs to identify the invariant sets  and the dynamical relations
between them. The invariant sets generate groups, and the dynamics defines
boundary operators, and when one has shown that these operators square to zero,
one can then define homology groups. The first such ideas may be seen in the
works of Riemann, Cayley and Maxwell in the 19th century. In 1925, Morse
\cite{Morse1925} developed his famous theory where he recovered the homology
of a compact Riemannian manifold $M$ from the critical points of a smooth
function $f$, assuming that these critical points are all non-degenerate. The
dynamics in question is that of the gradient flow of $f$. The basic invariant sets
then are precisely the critical points of $f$. The theory was analyzed and
extended by Milnor, Thom, Smale, Bott and others. In particular, Bott
\cite{Bott} extended
the theory to the case where the gradient of $f$ is allowed to vanish on a
collection of smooth submanifolds of $M$.

Based on ideas from supersymmetry,
Witten constructed an interpolation between de Rham and Morse homology. Floer \cite{Floer89}
then developed the very beautiful idea that the boundary operator in Morse
theory can be simply obtained from counting gradient flow lines (with
appropriate orientations) between critical points of index difference
one. The first systematic exposition of Floer's ideas was given in
\cite{Schwarz93} (see also \cite{Jost17a}). The main thrust of Floer's work was devoted to infinite dimensional
problems around the Arnold conjecture, see
\cite{Floer88a,Floer88b,Floer88c,Floer88d}, because for his theory, he only
needed relative indices, and not absolute ones, so that the theory could be
applied to indefinite action functionals. But also in the original finite
dimensional case, Floer's theory advanced our insight considerably and
motivated much subsequent work.

In fact, Floer \cite{Floer89} had been  motivated by another beautiful theory
relating dynamics and topology, that of Conley \cite{Conley78} (for more
details, see \cite{Conley/Zehnder84} and for
instance  the presentations in \cite{Smoller94,Jost2005}). Conley's
theory applies to arbitrary dynamical systems, not just gradient flows.
Actually, Smale \cite{Smale} had already extended Morse theory to an important
and general class of dynamical systems on compact Riemannian manifolds, those
that besides isolated critical points are also allowed to have non-degenerate
closed orbits. Similar to Morse functions, the class of Morse-Smale dynamical
systems is structurally stable, that is, preserves its qualitative properties
under small perturbations. It turns out, however, that these systems can also be subsumed
under Morse-Bott theory. In fact, in  \cite{Smale2} Smale proved that for
every gradient-like system there exists an energy function that is decreasing
along the trajectories of the flow, and  Meyer \cite{Meyer} generalized this
result to the Morse-Smale dynamical systems and defined a Morse-Bott type
energy function based on the flows. Such an energy function would then be
constant on the periodic orbits, and they can then be treated as critical
submanifolds via Morse-Bott theory.  Banyaga and Hurtubise \cite{Banyaga04,Banyaga,hurtubise}
then constructed a general boundary operator for Morse-Bott systems that put
much of the preceding into perspective (see also the detailed literature
review in \cite{hurtubise}). 

There is still another important extension of Morse theory, the combinatorial
Morse
theory of  Forman \cite{Forman} on simplicial and cell
complexes. Here, a function assigns a value to every simplex or cell, and
certain inequalities between the values on a simplex and on its facets are
required that can be seen as analogues of the non-degeneracy conditions of
Morse theory in the smooth setting. As shown in \cite{Benedetti}, this theory
recovers  classical Morse theory by considering PL triangulations of
manifolds that admit Morse functions. This theory has found various practical
applications in diverse fields, such as computer graphics, networks and sensor
networks analysis, homology computation, astrophysics, neuroscience,
denoising, mesh compression, and topological data analysis. (For more details
on smooth and discrete Morse theory  and their applications see
\cite{Knudson,Scoville}). In \cite{Forman2}, Forman also extended his
theory to combinatorial vector fields. 

In the first chapter of this part, we extend Floer's theory into the direction of Conley's
theory. More precisely, we shall show that one can define a boundary operator
by counting suitable flow lines not only for Morse functions, but also for
Morse-Smale dynamical systems, and in fact, we more generally also allow for
certain types of homoclinic orbits in the dynamical system. Perhaps
apart from this latter small extension, our results in the smooth setting
readily follow from the existing literature. One may invoke \cite{Meyer} to treat it as a Morse-Bott
function with the methods of \cite{hurtubise}. Alternatively,  one may  locally perturb
the periodic orbits into heteroclinic ones between two fixed points by a
result of Franks \cite{Franks} and then use \cite{Smale2} to treat it like a
Morse function. In some sense, we are also using such a perturbation. Our
observation then is that the collection of gradient flow lines between the
resulting critical points has a special structure which in the end will allow
us to directly read off the boundary operator from the closed (or homoclinic)
orbits and the critical points. It remains to develop Conley theory in more
generality from this perspective.  Moreover,   our approach also readily
extends to the combinatorial situation of \cite{Forman,Forman2}. Again, it is
known how to construct a Floer type boundary operator for a combinatorial
Morse function, and an analogue of
Witten's approach had already been developed  in
\cite{Forman98c}. Our construction here, however, is different from that of
that paper. In fact, \cite{Forman98c} requires stronger assumptions on the
function than the Morse condition, whereas our construction needs no
further assumptions. What we want to advocate foremost, however, is that the
beauty of Floer's idea of counting flow lines to define a boundary operator
extends also to dynamical systems with periodic orbits, in both the smooth and
the combinatorial setting, and that a unifying perspective can be developed.\\
We should point out that in this part and for both of the next chapters, we only treat homology with $\Z_2$
coefficients. Thus, we avoid having to treat the issue of orientations of
flow lines. This is, however, a well established part of the theory, see \cite{Floer93}
 or also the presentations in \cite{Schwarz93,Jost17a}.
 The structure of this paper is as follows: \\
 In Section 2 after reviewing basic notions, we introduce the generalized
 Morse-Smale systems and we define a boundary operator based on these systems
 by which we can compute the homology groups. In section 3 we turn to the
 discrete settings and present such a  boundary operator for general
 combinatorial vector fields by which we can compute the  homology of finite
 simplicial complexes. Both  sections finish with some
 concrete examples of computing homology groups based on our Floer type
 boundary operators.

\section{Generalized smooth Morse-Smale vector fields}
 \subsection{Preliminaries}
We  consider a smooth $m$ dimensional manifold $M$  that is closed, oriented
and equipped with a Riemannian metric whose distance function we denote by $d$. Let $X$ be a smooth vector field
on $M$ and $\phi_t : M\longrightarrow M $ be the flow associated to $X$. We
first recall some basic terminology.  For
$p\in M$, $\gamma(p) = \cup_{t} \phi_t(p)$ will denote the trajectory of $X$
through $p$. Then for each $p \in M$ we define the limit sets of  $\gamma(p)$
as
{ \begin{eqnarray*}
  \alpha(p) &:=& \cap_ {s \leq 0} \overline{ \cup_ {t \leq s} \phi_t(p)}\\
  \omega(p) &:=&
  \cap_ {s \geq 0}  \overline{ \cup_ {t \geq s} \phi_t(p)} .
  \end{eqnarray*}}
  \begin{definition}
    If $f: M\rightarrow M$ is a diffeomorphism, then $x \in M $ is called
    \emph{chain recurrent} if for any $\varepsilon > 0$  there exist points $x_1 = x,
    x_2,... ,x_n= x $ ($n$ depends on $\varepsilon$) such that $d(f (x_i),
    x_{i+1}) < \varepsilon$ for $ 1 \leq  i \leq n$. For a flow $\phi_t $,  $x \in M $ is 
    \emph{chain recurrent} if for any $\varepsilon > 0$  there exist points $x_1 = x,
    x_2,... ,x_n= x $ and real numbers $t(i)\geq 1$ such that $d( \phi_{t(i)}(x_i),
    x_{i+1}) < \varepsilon$ for $ 1 \leq  i \leq n$.  The set of chain
    recurrent points is called the \emph{chain recurrent set}  and will be denoted by $R(X)$. 
  \end{definition} 
   The \emph{chain recurrent set} $R(X)$ is a closed
   submanifold of $M$ that is invariant under $\phi_t$.  We can think of
   $R(X)$ as the points which come within $ \varepsilon $ of being periodic
   for every $ \epsilon > 0 $. A Morse-Smale dynamical system, as  introduced
   by Smale \cite{Smale},  has the
   fundamental property that it does not have any complicated recurrent
   behaviour and the $\alpha$ and $\omega$ limit sets of every trajectory can
   only be  isolated critical points $p$ or  periodic orbits $O$. Morse-Smale
   dynamical systems are the  simplest structurally stable
   types of dynamics; { that is, if $X$ is Morse-Smale and
     $X'$ is a sufficiently small $C^1$ perturbation of $X$ then there is a
     homeomorphism $h: M\rightarrow M$ carrying orbits of $X$ to orbits of
     $X'$ and preserving their orientation. (Such a homeomorphism is called a topological conjugacy and we say that the two vector fields or their corresponding flows are topologically conjugate)}. Here, we shall consider a somewhat 
   more general case where we allow for a certain type of homoclinic rest
   points and their homoclinic orbits.
 {\begin{definition}
 A periodic orbit of the flow $\phi_t$ on $M$ is hyperbolic if the tangent bundle of $M$ restricted to $O$, $TM_O$, is the sum of three derivative $D\phi_t$ invariant bundles  $E^c\oplus E^u \oplus E^s$ such that: 
   \begin{itemize}
   \item[1.] $E^c$ is spanned by the vector field $X$, tangent to the flow.
 \item[2.] There are constants $C, \lambda >0$, such that $\| D\phi_t(v) \|$  $\geq$ $Ce^{\lambda t} \| v \|$ for $v\in E^u$, $t \geq  0$ and  $\| D\phi_t(v) \|$ $ \leq$  $C^{-1}e^{-\lambda t} \| v \|$ for $v \in E^s, t \geq 0$ where $\|. \|$ is some Riemannian metric.   
   \end{itemize}
   A rest (also called critical)  point $p$ for a flow  $\phi_t$ is called hyperbolic provided that $T_pM= E^u \oplus E^s $ and the above conditions are valid for $v \in E^u$ or $E^s$. 
   \end{definition} }
  {  The stable and unstable manifolds of a hyperbolic periodic orbit $O$, are defined by: \\ 
 $W^{s}(O)= \lbrace x\in M \mid  d(\phi_tx, \phi_ty) \rightarrow 0 \text{ as }
   t\rightarrow\infty$ for some $y\in O \rbrace$ and $W^{u}(O)= \lbrace x\in M \mid  d(\phi_tx, \phi_ty)
   \rightarrow 0 \text{ as } t\rightarrow -\infty$ for some $y\in O \rbrace$. And for a rest point and a homoclinic orbit, we define the stable and unstable manifolds analogously. Also the index of a rest point or a closed orbit is defined to be the dimension of $E^u$.        
   We denote an arbitrary point in a
   homoclinic orbit $H$ by  $H_k^0$  where $k$ is the index of the homoclinic
   orbit $H$ and the homoclinic orbit itself is denoted by $H_k^1$ as it is
   homeomorphic to a circle and therefore is one-dimensional. Similarly by
   $O_k^0$  we mean an arbitrary point in a periodic orbit $O$ of index $k$
   and by $O_k^1$ we mean the orbit itself as a one dimensional structure,
   homeomorphic to a circle. 
 In the following definition we extend the definition of Morse-Smale vector fields.  }  
  { \begin{definition}
We call a  smooth flow $\phi_t$ on $M$ \emph{generalised Morse-Smale} if :
 \begin{enumerate}
 \item The chain recurrent set of the flow consists of a  finite number of hyperbolic rest
   points  $\beta_1(p)$,... $\beta_k(p)$ and/or hyperbolic periodic orbits $\beta_{k+1}(O)$,... $\beta_n(O)$.
 \item  $R(X)$ furthermore may have a finite number of homoclinic orbits  $\beta_{n+1}
   (H)$,... $\beta_l (H)$ 
   that can be obtained via local bifurcation from hyperbolic periodic orbits  $\beta_{n+1}(O),...., \beta_l (O) $.
     \item For each $\beta_i (p), 1 \leq i \leq k $ and each $\beta_i (O), k+1
       \leq i \leq l $ the stable and
       unstable manifolds $W^{s} ({\beta_i })$ and  $W^{u} ({\beta_i})$
       associated with $\beta_i$  intersect transversally.  \\ (Here, two  such
       submanifolds  intersect transversally if  for every $x \in  W^{u}({\beta_i}) \cap W^{s} ({\beta_j})$  we have:   $T_x (M) = T_x W^{u} ({\beta_i}) \bigoplus T_ x W^{s} ({\beta_j})$.) 
 \end{enumerate}
  \end{definition}}
  
  Note that the only difference between generalised Morse-Smale flows
  as defined here  and standard Morse-Smale flows is the possible
  existence of homoclinic points and orbits. In the standard case, one simply
  excludes the second condition. \\
 Therefore  a generalised Morse-Smale flow can be perturbed to a corresponding Morse-Smale flow where all of the homoclinic orbits  $\beta_i (H), n+1 \leq i \leq l $, are substituted by  periodic orbits $\beta_i (O)$.
 For any two distinct $\beta_i$ and $\beta_j$ in the above definition we
 consider $W(\beta_i, \beta_ j )= W^{u} (\beta_ i)\cap W^{s} (\beta_ j)$. Then
 based on the transversality condition, this intersection is  either empty (if
 there is no flow line from $\beta_i$ to $\beta_j$ or a submanifod of
 dimension $\lambda_{\beta_i} - \lambda_{\beta_j} + \dim \beta_i$  where the
 index of $\beta_k$ is denoted by $\lambda_{\beta_k}$ \cite{Banyaga}. In the second case the flow $\phi_t$ induces an $\mathbb{R}$-action on $W(\beta_i, \beta_j)= W^{u} (\beta_i)\cap W^{s} (\beta_j)$. Let
     \begin{center}
    $ M(\beta_i, \beta_j)= (W(\beta_i, \beta_j))/ \mathbb{R})$ 
     \end{center}
  be the quotient space by this action of the  flow lines from $\beta_i$ to $\beta_j$. \\
 	  {\begin{remark}   
 	  In the definition of standard Morse-Smale flow the condition (1) above could be replaced by $(1')$:\\
  All periodic orbits and rest points of the flow are hyperbolic and there
  exists a Morse-Bott type energy function (as defined by Meyer \cite{Meyer}).
  	  \end{remark}}

\subsection{The  chain complex for generalized Morse-Smale vector fields}
Suppose $X$ is a generalized Morse-Smale vector field over $M$. To motivate
our construction, we first observe that by a slight extension of a result of
Franks \cite{Franks}, we can replace every periodic or homoclinic orbit by two
non-degenerate critical points, without changing the flow outside some small
neighborhood of that orbit. \\
 \begin{wrapfigure}{r}{0.45\textwidth}
 \includegraphics[width=8 cm]{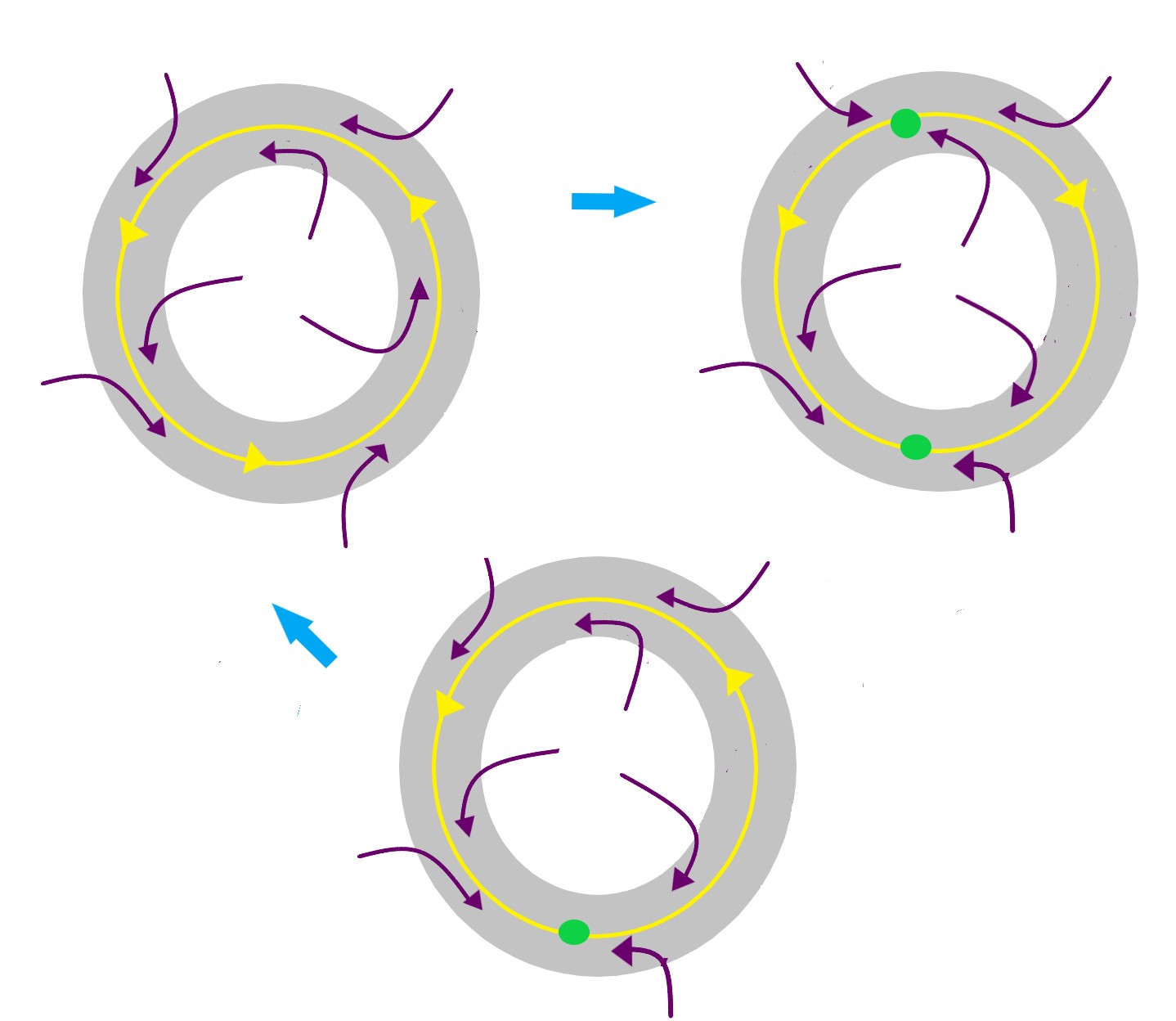} 
\end{wrapfigure}

\begin{lemma}\label{franks}
   Suppose $\phi_t$ is a  generalized Morse-Smale flow on an orientable
   manifold with a periodic or homoclinic orbit of index $k$. Then for any
   neighborhood $U$ of that orbit there exists a new generalized Morse-Smale flow $\phi'_t$ whose vector field agrees with that of $\phi_t$ outside $U$ and which has rest points $t$ and $t'$ of index $k+1$ and $k$ in $U$ but no other chain recurrent points in $U$.  
 \end{lemma}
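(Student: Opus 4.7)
The plan is to perform a local surgery inside a tubular neighbourhood of the given orbit: in coordinates adapted to the flow, I would introduce a compactly supported perturbation of the ``circle direction'' that replaces the periodic or homoclinic orbit by exactly two hyperbolic critical points of the required indices, while leaving the transverse dynamics and the flow outside the neighbourhood untouched.

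First I would treat the case of a hyperbolic periodic orbit $O$ of index $k$. Hyperbolicity together with the stable/unstable manifold theorem yields a tubular neighbourhood $V\subset U$ diffeomorphic to $S^1\times \R^k\times \R^{m-1-k}$, with coordinates $(\theta,x,y)$, in which the flow can be put into a normal form whose linear part is $\dot\theta=1$, $\dot x=Ax$, $\dot y=-By$, where $A$ has eigenvalues with positive real part and $B$ eigenvalues with negative real part. Fix a smooth cutoff $\chi(r)$ with $\chi\equiv 1$ for $r\leq r_0$ and $\chi\equiv 0$ for $r\geq r_1$, where $r^2=\|x\|^2+\|y\|^2$ and $\{r\leq r_1\}\subset V$. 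For small $\epsilon>0$, I would replace the $\theta$-equation by
\[
\dot\theta \;=\; (1-\chi(r)) \;+\; \chi(r)\,\epsilon\sin\theta,
\]
leave the $x,y$ equations untouched, and take $\phi'_t$ to coincide with $\phi_t$ outside $V$. The rest points of the new field in $V$ are exactly $t=(0,0,0)$ and $t'=(\pi,0,0)$; linearising gives a $\theta$-eigenvalue of $+\epsilon$ at $t$ and $-\epsilon$ at $t'$, so their unstable dimensions are $k+1$ and $k$ respectively, both hyperbolic.

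Next I would verify that no other chain recurrent point of $\phi'_t$ enters $U$. The transverse equations still contract onto the core circle $\{x=y=0\}$, and on that circle the dynamics reduces to $\dot\theta=\epsilon\sin\theta$, a gradient-like system whose only chain recurrent points are $t$ and $t'$. For a homoclinic orbit $H_k^1$, I would exploit the standing hypothesis that $H$ arose via a local bifurcation from a hyperbolic periodic orbit of index $k$: reversing this bifurcation by a perturbation supported in $U$ produces a hyperbolic periodic orbit, to which the construction above then applies.

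The hard part will be checking that the transversality condition in the generalised Morse-Smale definition is preserved after the surgery, i.e.\ that the new $W^{u}(t), W^{s}(t), W^{u}(t'), W^{s}(t')$ still meet the stable and unstable manifolds of every other invariant set transversally. Outside $U$ nothing has been altered, and inside $U$ the perturbed invariant manifolds are $C^1$-close to the unperturbed $W^{u/s}(O)$ by standard stable manifold perturbation theory (the inclination/$\lambda$-lemma and continuous dependence of invariant manifolds on the vector field); hence for $\epsilon$ and $U$ sufficiently small, transversality is inherited from that of the original flow. This is essentially the only delicate step, and it is the one where care over the quantitative choices of $\chi$, $\epsilon$, and the size of $V$ is needed.
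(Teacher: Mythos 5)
Your overall strategy is sound, but you should know that the paper does not carry out this surgery at all: its proof of the lemma consists of citing Franks' theorem for the periodic-orbit case and then observing that a homoclinic orbit can first be turned back into a periodic orbit by reversing the local bifurcation through which it arose (your last paragraph about $H_k^1$ reproduces this second step exactly). What you are attempting in the first three paragraphs is therefore a reconstruction of the content of Franks' cited result, and as such it has two genuine gaps. First, the coordinates you set up do not always exist: orientability of $M$ makes $TM|_O \cong E^c\oplus E^u\oplus E^s$ a trivial bundle over $S^1$, but it does not make $E^u$ and $E^s$ individually trivial. For a ``twisted'' orbit (e.g.\ the suspension of $\mathrm{diag}(-2,-1/2)$ inside an orientable mapping torus) the neighbourhood is not diffeomorphic to $S^1\times\R^k\times\R^{m-1-k}$ and there is no global normal form $\dot x=Ax$, $\dot y=-By$ with constant matrices. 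Your perturbation can be salvaged, since $\epsilon\sin\theta$ depends only on $\theta$ and on a fibrewise radius $r$, and the indices $k+1$ and $k$ of the two zeros are determined by the linearisation at each point and so survive twisting; but as written the construction silently excludes twisted orbits, a case Franks treats explicitly.

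Second, the two verifications you defer are precisely where the work lies. Ruling out new chain recurrence is not purely local: a compactly supported modification inside $U$ can in principle create chain recurrent points whose $\varepsilon$-chains leave and re-enter $U$, so arguing that the core-circle dynamics $\dot\theta=\epsilon\sin\theta$ is gradient-like does not by itself show that $R(\phi'_t)\cap U=\{t,t'\}$, nor that no new recurrence appears elsewhere. Likewise, ``transversality is inherited for $\epsilon$ small'' needs care because $C^1$-closeness of invariant manifolds only gives persistence of transversality on compact pieces, while the stable and unstable manifolds involved are non-compact; one must also check transversality of the two new heteroclinic connections from $t$ to $t'$ along the core circle (this one does work out: $T W^u(t)+TW^s(t')$ contains $\langle\partial_\theta\rangle\oplus E^u\oplus E^s$). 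If your intention is to give a self-contained proof rather than cite Franks, these two points must be argued, not asserted; if you are content to cite Franks as the paper does, then only your final paragraph (the bifurcation reversal for homoclinic orbits) is actually needed, and that part agrees with the paper.
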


\begin{proof}
In \cite{Franks} Franks proved that for a Morse-Smale flow $\phi_t$  on an
orientable manifold with a closed periodic orbit $O$ of index $k$ and a given
neighbourhood $U$ of $O$, there exists a new Morse-Smale flow $\phi'_t$ whose
vector field agrees with that of $\phi_t$ outside $U$ and which has rest
points $q_1$ and $q_2$ of index $k$ and $k+1$ in $U$ but no other chain
recurrent points in $U$.

 For the generalized Morse-Smale flow we note that
each homoclinic orbit is by definition obtained in a continuous local
bifurcation of a periodic orbit. 
Therefore if we use this bifurcation in the
reverse direction and substitute again any such homoclinic orbit with its
corresponding periodic orbit we can use Franks' argument for replacing all the
periodic and homoclinic orbits with two rest points and two heteroclinic orbits between them.   
\end{proof}

 	  {\begin{remark}   
In the above figure, the qualitative features of the three cases
outside the gray annulus are 
the same. In particular, we can bifurcate two heteroclinic orbits between two
critical points (in the right) to get a homoclinic orbit and a homoclinic critical point (in the middle) by bringing the two critical points closer and closer and then bifurcate the homoclinic orbit to a periodic orbit (in the left). 
\end{remark}} 

With this lemma, we can turn our flow into one that has only
non-degenerate critical points. We could then simply utilize the Floer
boundary operator for that flow. In fact, that motivates our construction, but
we wish to define a Floer type boundary operator directly in terms of the
periodic and homoclinic orbits and the critical points. Our simple observation
is that a Floer boundary operator resulting from the replacement that Franks proposed,
has some additional structure that is derived from the orbits that have been
perturbed away. This allows for an arrangement of the flow lines between the
critical points of the perturbed flow that leads to the definition of the
boundary operator in the presence of those orbits. That is, we can read off
the boundary operator directly from the relations between the orbits and the
critical points without appealing to that perturbation, although the
perturbation helps us to see why this boundary operator squares to 0. \\
Furthermore the above remark on the reverse of Franks' perturbation procedure can be used to turn a pair of critical points whose indices differ by one and which are connected by two flow lines to a periodic orbit. Similarly for homoclinic orbits.\\
On the other hand, by the Morse cancellation theorem, we can cancel a pair of critical points $(p_k, p_{k-1})$ when there is a single flow line between them (if their unstable and stable manifolds, resp.,  intersect transversally). For more details on this theorem, we refer to \cite{Milnor, cancel}.

We now define the
Morse-Floer complex  $(C_*(X), \partial)$ of $X$ as follows. Let $C_k$ denote the finite vector space (with coefficients in $\mathbb{Z}_2$) generated by the following set of rest points/orbits of the vector field: 
	\[ \left(  p_k,  O^0_ {k},  O^1_ {k-1}, H_ {k}^0, H_ {k-1}^1
          \right) . \]

Before defining a boundary operator for this chain complex we need  one more definition: \\
\begin{definition}\label{3}
    We call a  generalized Morse Smale system   \emph{simple} if  with the help of the Morse cancellation theorem  and/or the 
reverse of Franks' perturbation procedure we can get a generalized Morse-Smale system which has no flow lines between its generators of the same $C_k$. 
\end{definition}
We note that in a (generalized) Morse-Smale in principle we might have flow lines between two orbits of the same index or a critical point of index $k$ to an orbit of index $k-1$ which belong to the same $C_k$.  From now on, we assume that our generalized Morse-Smale system is simple. For simple systems, we define the differential $\partial$ as follows:

The differential $\partial_k : C_k(X) \longrightarrow C_{k-1}(X)$  counts the
number of connected components of $M(\beta_i, \beta_ j )$ (mod 2) where
$\beta_i$ and $\beta_j$ are isolated rest points $p_k$ or closed orbits
(either homoclinic orbits $H$  or periodic orbits  $O$). Here, each such
orbit, carrying topology in two adjacent dimensions, corresponds to two
elements in the boundary calculus. More precisely, a periodic orbit $O_{k}$ of index
$k$ generates an element $O_k^1$ of dimension $k+1$ and an element $O_{k}^0$
in dimension $k$, and analogously for homoclinics. Thus, our boundary
operator is:
          \begin{eqnarray*}
\partial p_k &=& \sum \alpha (p_k, p_{k-1})  p_{k-1} + \sum  \alpha (p_k,  O_{k-2})   O^1_{k-2}\\ &+& \sum  \alpha (p_k,  H_ {k-2})   H_ {k-2}^1  \\\\ \partial  O^0_{k}&=& \sum \alpha ( O_{k},  O_{k-1})  O^0_{k-1}+  \sum \alpha ( O_{k},  H_ {k-1})  H_ {k-1}^0 \\ &+&  \sum \alpha ( O_{k}, p_{k-1}) p_{k-1} \\\\  \partial  O^1_{k-1}&=& \sum \alpha( O_{k-1},  O_{k-2})  O^1_{k-2} + \sum \alpha ( O_{k-1},  H_ {k-2})  H_ {k-2}^1 \\\\  \partial  H_ {k}^0&=& \sum \alpha ( H_ {k},  H_ {k-1}) H_ {k-1}^0+  \sum \alpha ( H_ {k},  O_ {k-1})  O_ {k-1}^0 \\ &+&   \sum \alpha ( H_ {k}, p_{k-1}) p_{k-1} \\\\  \partial  H_ {k-1}^1&=& \sum \alpha ( H_ {k-1}, H_ {k-2})  H_ {k-2}^1+ \sum \alpha ( H_ {k-1},  O_{k-2})  O^1_{k-2}.  
\end{eqnarray*}
In this definition, the sums extend over all the elements on the right hand
side; for instance, the first sum in the first line is over all critical
points $p_{k-1}$ of index $k-1$.  $\alpha (p_k, p_{k-1})$, similar to the classical
Morse-Floer theory (where there is no closed orbit and therefore the vector
field is, { up to topological conjugacy}, gradient-like), is the number of  flow lines from $p_k$ to $p_{k-1}$.
% \cite{Jost2005}.
 We observe some terms do not appear; for instance, we do not
have terms with coefficients  of the form { $\alpha (p_k,  O_{k-1}) $,
  {nor of the form} $\alpha (p_k,  H_{k-1})$. This
will be important  below in the
proof of Thm. \ref{1}. The reason why such a term does not show up is that if
there were a flow line from some $p_k$ to some $O_{k-1}^0$, then there would
also be a flow to the corresponding $O_{k-1}^1$ which comes from the same
closed orbit. But $O_{k-1}^1$ and $p_k$ are the elements of the same  $C_k$,
and by the simple generalised Morse-Smale system condition, there are no flow lines between critical
elements of the same $C_k$. Analogously for homoclinics.\\
	  \begin{remark}   
  Note that in defining  the chain complex and the corresponding  boundary
  operator $\partial$ for $X$, we could first replace all the homoclinic
  orbits with bifurcated periodic orbits and present our definitions for the
  simpler case where  all the closed orbits are periodic. Then we would have
  just three generators	\[ \left(  p_k,  O^0_ {k},  O^1_ {k-1}  \right) \] for
  $C_k(X)$. However here we choose not to do this to emphasize that we can
  construct the boundary operator also for homoclinic orbits  as long as our operator is defined based on the flow lines outside the tubular neighborhoods of orbits. 
   \end{remark}    
  \begin{theorem} \label{1}
  $\partial^ 2 = 0$.
   \end{theorem}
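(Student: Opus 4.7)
The plan is to deduce $\partial^2 = 0$ from the classical Morse-Floer case by means of Lemma \ref{franks}. More precisely, I would proceed as follows.

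First, apply Lemma \ref{franks} inside disjoint tubular neighbourhoods $U_j$ of each periodic or homoclinic orbit of $X$ to obtain a Morse-Smale gradient-like flow $X'$ whose chain recurrent set consists only of hyperbolic rest points: every closed orbit of index $j$ is replaced by a pair of critical points $q^-_j, q^+_{j+1}$ of indices $j$ and $j+1$, joined inside $U_j$ by exactly two heteroclinic trajectories. Since $X'$ is a standard Morse-Smale flow, its Morse-Floer boundary operator $\partial'$ satisfies $(\partial')^2 = 0$ by the classical argument (see \cite{Schwarz93, Jost17a}).

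Next, define a grading-preserving $\mathbb{Z}_2$-linear isomorphism
\[
\Phi : C_*(X) \longrightarrow C_*(X'), \qquad p_k\mapsto p_k,\ O^0_k\mapsto q^-_k,\ O^1_k\mapsto q^+_{k+1},
\]
with the analogous rule for the homoclinic generators, and verify $\Phi\circ\partial = \partial'\circ\Phi$. Since $X$ and $X'$ agree outside $\bigcup_j U_j$, the flow-line segments away from those neighbourhoods coincide. A flow line of $X$ whose $\omega$-limit is a closed orbit enters some $U_j$ and, under the perturbation, becomes a flow line of $X'$ terminating at the upper critical point $q^+_{j+1}$; dually, a flow line of $X$ emanating from an orbit corresponds to a flow line of $X'$ starting at $q^-_j$. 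Transversality and the dimension formula then give a bijection between the connected components of the moduli spaces $M(\beta,\gamma)$ defining the coefficients $\alpha$ of $\partial$ and the analogous moduli spaces for $\partial'$, once the shift $O^1_k \leftrightarrow q^+_{k+1}$ is applied. The two heteroclinics internal to each $U_j$ cancel modulo $2$, so they contribute no extra terms to $\partial'$. The absence in the formulae for $\partial$ of coefficients such as $\alpha(p_k,O^0_{k-1})$ or $\alpha(p_k,H^0_{k-1})$ is forced by Morse-Smale transversality: any such flow line of $X$ would, after perturbation, yield a flow line of $X'$ between critical points of equal index, which is ruled out.

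Once the chain-map property of $\Phi$ is established, the conclusion follows at once: $\partial^2 = \Phi^{-1} (\partial')^2\, \Phi = 0$. The main obstacle is the rigorous identification of flow lines: one must show that the Franks bifurcation inside each $U_j$ extends every flow line of $X$ that accumulates on $O_j$ to a unique flow line of $X'$ ending at $q^+_{j+1}$ (and conversely that each flow line of $X'$ ending at $q^+_{j+1}$, other than the two internal heteroclinics, arises this way), and that this bijection preserves connected components modulo $2$. Controlling the asymptotic winding of flow lines onto the original closed orbit in a way that is compatible with taking $\mathbb{Z}_2$-counts of components is the delicate point of the argument.
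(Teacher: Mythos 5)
Your proposal is correct and follows essentially the same route as the paper: both use Lemma \ref{franks} to replace each closed or homoclinic orbit by a pair of critical points joined by two internal heteroclinics, observe that those two heteroclinics cancel mod $2$ and that Morse--Smale transversality rules out the remaining cross terms, and then transport $(\partial')^2=0$ from the classical gradient case back through the isomorphism $O^0_k\mapsto q^{-}_k$, $O^1_k\mapsto q^{+}_{k+1}$. The ``delicate point'' you flag at the end --- that each connected component of a moduli space $M(\beta_i,\beta_j)$ corresponds to exactly one flow line of the perturbed field --- is exactly the issue the paper addresses in the remark following the proof.
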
 
 In classical Morse-Floer theory, one assumes that there are only isolated
critical points and no closed or homoclinic orbits, and therefore  all the $\alpha$ coefficients in the
definition of $\partial $ except the first one (in the first row) are zero; there to
prove $\partial^ 2 = 0$ one can then use the classification theorem of one
dimensional compact manifolds where the number of connected components of
their boundary mod two is zero (see \cite{Jost17a}). Here as  $W(\beta_i,
\beta_ {i-1} )$ might have dimension bigger than one, the number of connected
components of the boundary of compact two dimensional manifolds might
vary. For our generalized Morse-Smale flows, however, we use Lemma \ref{franks} 
to replace any orbit of index $k$ (both periodic $O_ {k}$ and
homoclinic $H_ {k}$)  by a rest point of index $k$ and one of $k+1$
which are joined by two heteroclinic orbits. When replacing
$H_ {k}$,  the resulting rest point of higher index can be taken 
to be the point  $h$ itself, which then will be no longer  homoclinic. 
 \begin{proof} By the above replacement, we get a vector field $Y$
   which has no periodic and homoclinic orbits and is therefore
   gradient-like (up to topological conjugacy \cite{Franks1979}). This $Y$  has all
   the isolated rest points of $X$, two isolated rest points
   $q^{up}_k$ and $q'^{down}_{k-1}$ instead of every periodic orbit
   $O_{k-1}$ of index $k-1$ and two isolated rest points $t^{up}_k$ and
   $ t'^{down}_{k-1}$ instead of every homoclinic orbit $H_ {k-1}$ of
   index $k-1$. We note that all the critical points in
  $Y$ are isolated and for each index $k$ they can be  partitioned into five
  different sets $p_k, t^{up}_k, t'^{down}_k , q^{up}_k, q'^{down}_k $. This
  partitioning is possible because orbits and isolated rest points have pairwise empty 
  intersection.
  The proof will now consist of the following main steps:

      \item[1.]   We define $ C_k(Y)$ to be the finite vector space (with coefficients in $\mathbb{Z}_2$) generated by
   	\[ \left(  p_k,  q^{up}_k,  q'^{down}_{k} ,  t^{up}_k,  t'^{down}_{k} \right) \]
  
       \item[2.] We  define a boundary operator $\partial'$ and consequently a chain complex 
       corresponding to $(Y,C_*(Y),  \partial')$. 
        \item[3.]  and then we prove there is an isomorphism (chain map) $\varphi_* : 
        C_*(X) \longrightarrow C_*(Y)$. Since $\varphi$ is an isomorphism we get our desired equality $\partial^
    2 = 0$ as $\partial = \varphi_* ^{-1}  \partial' \varphi_*$ and  $\partial^ 2=\varphi_* ^ {-1} { 
    \partial'}^2\varphi_*$  
 \begin{itemize}    
 
  \item[1.]  We note that in $ C_k(Y)$,  $q^{up}_k$ comes from a periodic orbit of index $k-1$ and  $q'^{down}_{k} $ comes from the replacement of a periodic orbit of index $k$. Similarly  $t^{up}_k$ is obtained from replacing a homoclinic orbit of index $k-1$ and $t'^{down}_{k}$ comes from a homoclinic orbit of index $k$.
    \item[2.] We define  $\partial'_k : C_k(Y) \longrightarrow C_{k-1} (Y)$ as follows:
      \begin{eqnarray*}
	\partial' p_k &=& \sum \alpha (p_k, p_{k-1})  p_{k-1} + \sum  \alpha(p_k, q^{up}_{k-1})  q^{up}_{k-1} \\ &+& \sum  \alpha(p_k, t^{up}_{k-1})  t^{up}_{k-1}\\\\
	\partial' q'^{down} _{k}&=& \sum \alpha (q'^{down} _{k}, q'^{down} _{k-1}) q'^{down} _{k-1}  \\ &+& \sum \alpha (q'^{down} _{k}, t'^{down} _{k-1}) t'^{down} _{k-1}  \\ &+&  \sum \alpha (q'^{down} _{k}, p_{k-1}) p_{k-1} \\\\ \partial'q^{up} _{k}&=& \sum \alpha (q^{up} _{k}, q^{up} _{k-1})  q^{up} _{k-1} \\ &+&  \sum \alpha (q^{up} _{k}, t^{up} _{k-1})  t^{up} _{k-1}  \\\\ \partial' t'^{down} _{k}&=& \sum \alpha (t'^{down} _{k}, t'^{down} _{k-1}) t'^{down} _{k-1}  \\ &+&\sum \alpha (t'^{down} _{k}, q'^{down} _{k-1}) q'^{down} _{k-1} \\ &+&  \sum \alpha (t'^{down} _{k}, p_{k-1}) p_{k-1}   \\\\ \partial't^{up} _{k}&=& \sum \alpha (t^{up} _{k}, t^{up} _{k-1})  t^{up} _{k-1} \\ &+&  \sum \alpha (t^{up} _{k}, q^{up} _{k-1})  q^{up} _{k-1}
       \end{eqnarray*}
     \end{itemize}

These sums extend over all the elements on the right hand
side and $\alpha $ is the number of gradient flow lines (mode 2) between the corresponding critical points.
     We want to prove ${\partial'}^ 2 = 0$ over $C_k(Y)$ by equating
     $\partial'$ with the boundary operator $\partial^M$ of  Floer  theory
     which is of the form
$\partial^M (s_k) = \sum \alpha (s_k, s_{k-1})  s_{k-1} $ for a gradient
vector field and counts the number of gradient flow lines $\alpha$ (mod 2)
between two rest points with relative index difference one, without any
partitioning on the set of isolated rest points $s_k$ of index $k$. In our case,
we have such a partitioning and therefore more refined relationships in the
definition of $\partial'$.  And then,  for all the generators
of $C_*(Y)$,  $\partial' = \partial^M$;  we show this equality
for $p_k,q^{up} _{k}, t'^{down} _{k}  $ as for the other cases it can be
similarly proved. \\
If we consider such a partitioning on the set of rest points of our vector field we have:  \\ 
      \begin{eqnarray*}
	\partial^M (p_k) &=& \sum \alpha (p_k, p_{k-1})  p_{k-1} +  \sum
        \alpha (p_k, q^{up}_{k-1})  q^{up}_{k-1}   \\&+&  \sum \alpha (p_k,
        q'^{down}_{k-1})  q'^{down}_{k-1} + \sum \alpha (p_k, t^{up}_{k-1})
        . t^{up}_{k-1}   \\&+&  \sum \alpha (p_k, t'^{down}_{k-1})  t'^{down} 
          \end{eqnarray*}
          \\
        Comparing this formula with that of $\partial' p_k $  we see that we
        have two extra terms in the latter; as we have explained after the
        definition of $\partial$,  the 3th
        and the 5th term are not present in the former case.  
        To have  $ \partial^M(q^{up} _{k}) = \partial'(q^{up} _{k})$,  the
         three coefficients 
	$$\alpha (q^{up} _{k}, q'^{down} _{k-1}), \alpha (q^{up} _{k},
        t'^{down} _{k-1}), \alpha (q^{up} _{k}, p_{k-1})$$ need to be zero. The
        first one is zero since there are exactly two gradient flow lines
        (heteroclinic orbits) from  $q^{up} _{k}$ to  $q'^{down} _{k-1}$
        which correspond to replacement of an orbit $O_{k-1}$. We note that
        for the other $q'^{down} _{k-1}$  coming from other orbits $\alpha$ is
        zero by definition of $\partial$ over $C_k(X)$ as otherwise in $X$ we
        would have flow lines between two orbits of the same index which is
        not possible by the simple generalised Morse-Smale condition. For the same reason, the
        second element is also zero since there is no flow line from
        $q^{up}_k$ to $t'^{down} _{k-1}$. Also the last $ \alpha$ is zero as
        otherwise there would be flow lines from a periodic orbit of index
        $k-1$ to an isolated rest point with index $k-1$ in $X$, again
        violating simple generalised Morse-Smale. \\
        Finally $ \partial^M(t'^{down} _{k}) =   \partial'(t'^{down} _{k})$ if
        we show that $ \alpha (t'^{down} _{k}, t^{up} _{k-1}) $ and $ \alpha
        (t'^{down}, q^{up} _{k-1})$ are zero. If not,  there would be two
        orbits  in $X$ with index difference two which are the boundaries of a
        cylinder, which is not possible.    \\
        Therefore over $C_*(Y)$,  $\partial^M=  \partial' $ and hence
        $\partial'^ 2 = 0$ by classical Morse-Floer theory. 
      \item[3.] We now define  $\varphi_* : C_*(X) \longrightarrow C_*(Y)$. 
        For $ 0\leq k \leq m$, we put
        \begin{eqnarray*}
	\varphi_*( p_k)= p_k,\quad \varphi_*  (O^0_ {k}) = q'^{down}_k ,\quad \varphi_*( O^1_ {k-1})= q^{up}_k ,  \\ \quad \varphi_*( H_ {k}^0)= t'^{down}_k  ,\quad  \varphi_*( H_ {k-1}^1)= t^{up}_k. 
 \end{eqnarray*}
	 $\varphi_* $ is an isomorphism by the above construction of the
         rest points of $Y$. To prove $\varphi_*$ is a chain map  from
         $C_*(X) $ to $ C_*(Y)$,  we should have $ \partial' \varphi_* =
         \varphi_*   \partial  $. 
Here, we show this equality  for one of the generators of $C_k(X)$ and for the
others it can be similarly obtained. For $O^1_ {k-1}$ we have:  
\begin{equation*}
\begin{aligned}[t]
 \varphi_* \partial ( O^1_ {k-1}) 
    &=  \varphi_*   \left(\sum \alpha( O_{k-1},  O_{k-2}) . O^1_{k-2} + \sum \alpha ( O_{k-1}, H_ {k-2}) . H_ {k-2}^1 \right)\\
    &= \sum \alpha (q^{up} _{k}, q^{up} _{k-1}) . q^{up} _{k-1} +  \sum \alpha (q^{up} _{k}, t^{up} _{k-1}) . t^{up} _{k-1}\\
    &= \partial'q^{up} _{k}\\
    &= \partial' \varphi_* ( O^1_ {k-1}) 
\end{aligned} 
\end{equation*}
Therefore  $\partial' \varphi_* = \varphi_*   \partial $ and  since ${\partial'}^2 = 0$  and  $\partial^ 
2 =\varphi_*^{-1} {\partial'}^ 2 \varphi_*, \partial^ 2 = 0$
  \end{proof} 
  \medskip
We can then define $\mathbb{Z}_2$ Morse-Floer homology of $M$ by putting 
for each $k, 0 \leq k \leq m $, 
\begin{center}
$H_k (M, \mathbb{Z}_2) = \frac {ker (\partial_k) }{image (\partial_{k+1})}$.\\ 
\end{center}

\begin{remark}
   Although here we do not treat orientations, we observe from  the following figure that in the above equalities $ \varphi_*$ preserves the parity of  $\alpha$ as each connected component of $M(O_{k-1},  O_{k-2})$  corresponds to exactly one gradient flow line from  $q^{up} _{k}$ to $ q^{up} _{k-1}$ (and exactly one flow line from  $q'^{down} _{k-1}$ to $ q'^{down} _{k-2}$). Similarly the same happens when  we consider connected components of $M(O_{k-1},  H_{k-2})$. 
  
\begin{center}
\includegraphics[width=5 cm]{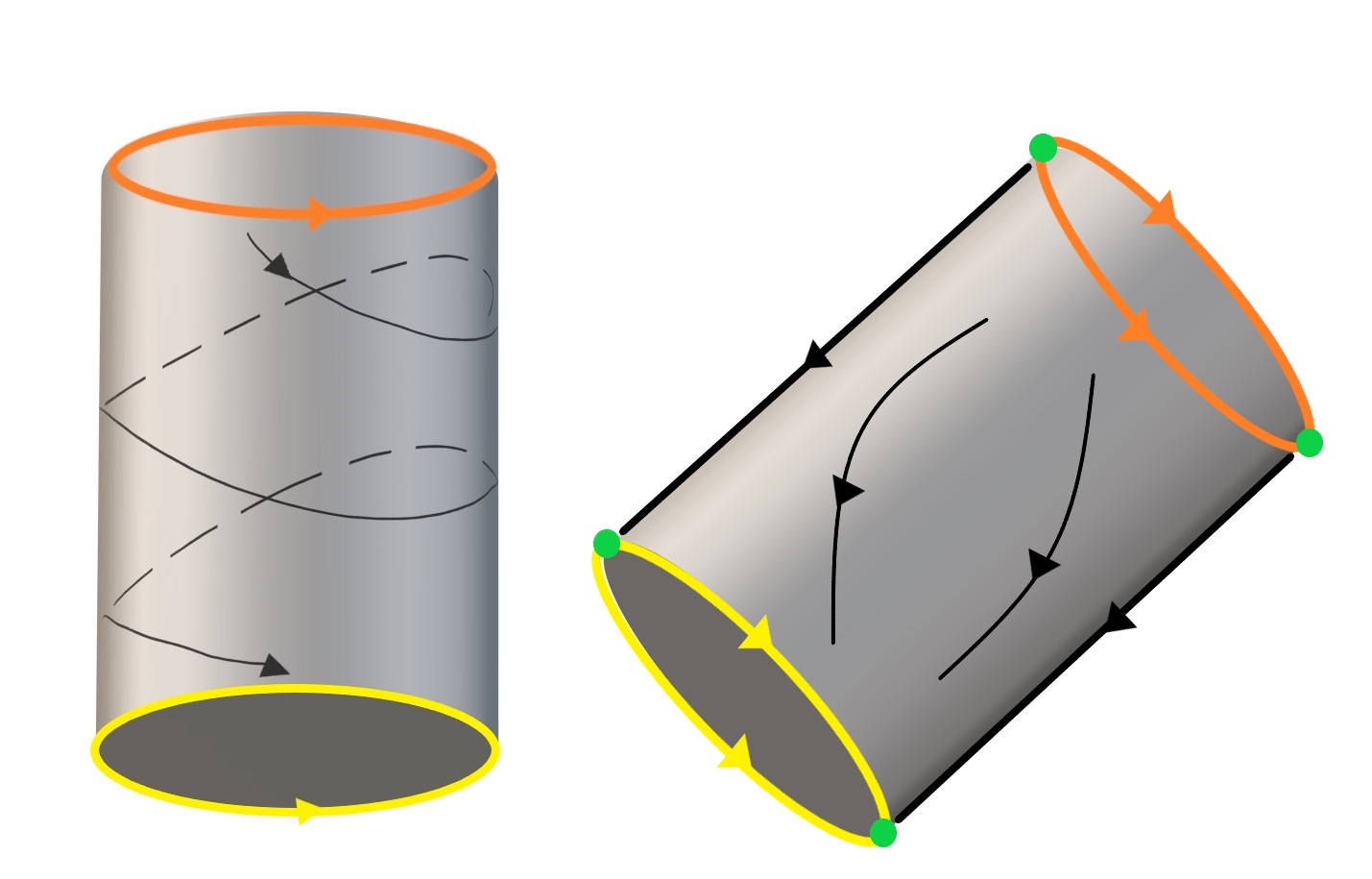} 
\end{center}
\medskip
\end{remark} 
  \begin{remark}
      As pointed out by the authors of \cite{Clemens}, the above boundary operator does not necessarily square to 0  for generalised Morse-Smale systems. However many typical examples including the ones presented in \cite{Clemens} are of  simple type (see definition \ref{3}).  In such cases after simplifying the system using the  Morse cancellation  theorem and/or the reverse of Franks' perturbation procedure, our boundary operator does square to 0 and can therefore be used to obtain the Morse-Floer homology. \end{remark} 
 
 \subsection{Computing Homology Groups of Smooth Manifolds}
We shall now illustrate the simple computation of Floer homology for some smooth vector fields.

1. Let the sphere $S^2$ be equipped with a vector field $V$ which has two isolated rest points of index zero at the north (N) and the south (S) pole, and one periodic orbit O of index one on the equator. Then \\ 
 \begin{wrapfigure}{r}{0.4\textwidth}
\hspace{1.5 cm}\vspace{0.4 cm} \includegraphics[width=3.5 cm]{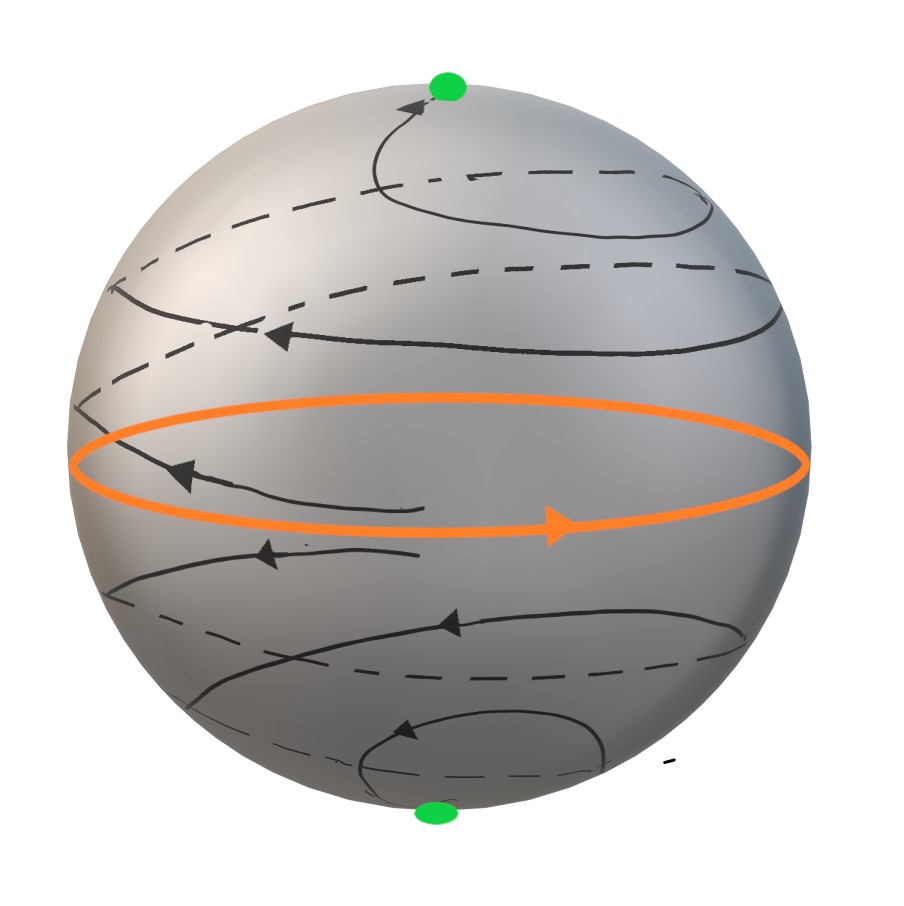} 
\end{wrapfigure}
\begin{eqnarray*}
  C_2&=& \left(  O^1_{1}\right) \\
  C_1&=& \left(  O^0_ {1}
         \right) \\
  C_0&=& \left( N_0, S_0 \right) 
  \end{eqnarray*}

$\partial_2 O^1_{1} =0$  since there is no closed orbit of index $0$ and therefore $O^1_{1}$ is the only generator for $H_2(M, \mathbb{Z}_2)$.\\
$\partial_1 O^0_{1}=  \alpha ( O_{1}, N_{0}) .N_{0} + \alpha ( O_{1}, S_{0}) .S_{0} = N_0+S_0 \neq 0 $ and therefore  $O^0_{1}$ does not contribute to $H_1(M, \mathbb{Z}_2)$ and $H_1(M, \mathbb{Z}_2)=0$.
 $\partial_0  N_0= 0=\partial_0  S_0$ but since $N_0+S_0$ is in the image of   $\partial_1$ therefore we have a single generator for  $H_0(M, \mathbb{Z}_2)$. 
\begin{wrapfigure}{r}{0.4\textwidth}
\hspace{1.5 cm}\vspace{0.4 cm} \includegraphics[width=3.5 cm]{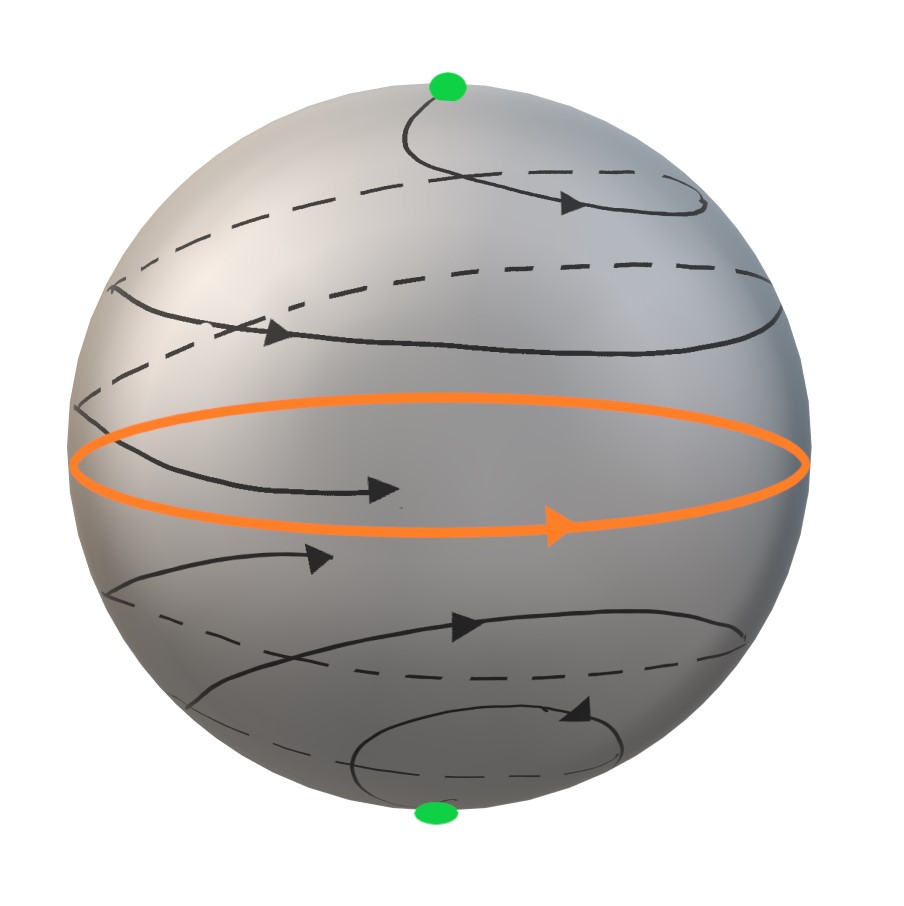} 
\end{wrapfigure}
\medskip

2. If we reverse the orientation of flow lines in the previous example, the isolated rest points at the north and south pole will get index two and the index of the  periodic orbit becomes zero. Therefore:  \\
\begin{eqnarray*}
  C_2&=& \left( N_2, S_2 \right) \\
  C_1&=& \left(
         O^1_{0}\right) \\
  C_0&=& \left(   O^0_ {0} \right)
  \end{eqnarray*}
$\partial_2  N_2= O^1_{0}=\partial_0  S_2$ and $N_2-S_2$ is the generator for $H_2(M, \mathbb{Z}_2)$.\\
Also $\partial_1 O^1_{1} =0$  but since $O^1_{1}$ is in the image of $\partial_2 $ it does not contribute to  $H_1(M, \mathbb{Z}_2)$.
Finally $\partial_0 O^0_{0}= 0$ and therefore  $O^0_{0}$ is the only generator for  $H_0(M, \mathbb{Z}_2)$. \\  \\
3. Consider $S^2$ with a vector field $V$ which has two isolated rest points,  at the north  pole of index zero and at the south pole of index two, one orange homoclinic orbit $H$ of index one and one yellow periodic orbit $O$ of index zero. \\
 \begin{wrapfigure}{r}{0.4\textwidth}
\hspace{1.5 cm}\vspace{0.4 cm} \includegraphics[width=3.3 cm]{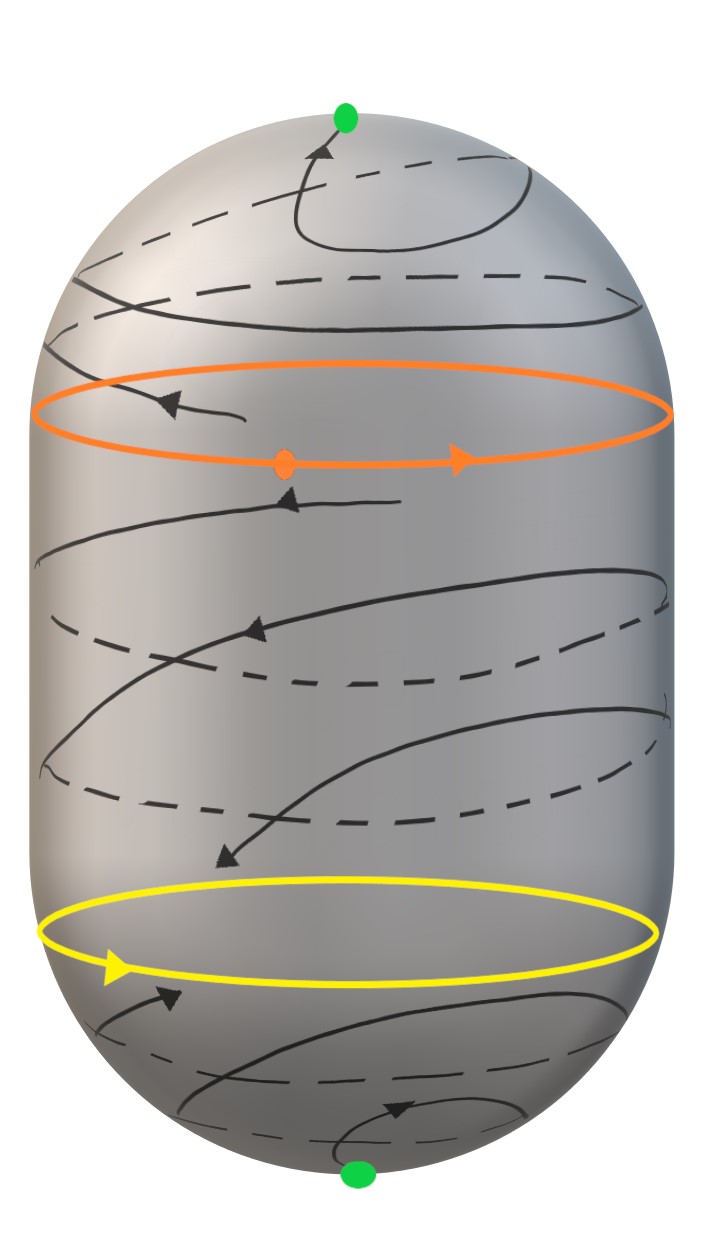} 
\end{wrapfigure}
\begin{eqnarray*}
  C_2&=& \left( S_2, H_{1}^1 \right) \\
  C_1&=& \left(  H_
         {1}^0, O^1_ {0}  \right) \\
  C_0&=& \left( O^0_ {0}, N_0 \right)
  \end{eqnarray*}

$\partial_2 S_2= O^1_ {0}= \partial_2 H_{1}^1 $ and therefore $S_2- H_{1}^1 $ is the only generator for $H_2(M, \mathbb{Z}_2)$.\\
$\partial_1 H_ {1}^0=  \alpha ( H_ 1, O_{0}) .O^0_ {0} + \alpha (
H_ 1, N_{0}) .N_{0} =O^0_{0} + N_{0} \neq 0 $\\
and therefore  $H_ {1}^0$ does not contribute to $H_1(M, \mathbb{Z}_2)$. On the other hand, $\partial_1 O^1_ {0}= 0 $ but since $O^1_ {0}$ is in the image of $\partial_2$ it does not contribute to $H_1(M, \mathbb{Z}_2)$ and therefore $H_1(M, \mathbb{Z}_2)=0 $. 
 $\partial_0  N_0= 0=\partial_0 O^0_ {0} $ but since $O^0_ {0} + N_{0}$ is in the image of $\partial_1$ therefore we have just one generator for  $H_0(M, \mathbb{Z}_2)$.   \\ \\ \\

 4. Finally, a two dimensional Torus $T^2$  with a vector field $V$  with two periodic orbits $O_1$  and $O'_0$:  

 \
 \begin{eqnarray*}
   C_2&=& \left(  O^1_{1}\right) \\
   C_1&=&   \left(  O^0_ {1}, O'^1_{0}  \right)\\
   C_0&=& \left( O'^0_ {0} \right)
   \end{eqnarray*}
     \begin{wrapfigure}{r}{0.4\textwidth}
\hspace{1.5 cm}\vspace{0.4 cm} \includegraphics[width=4.5 cm]{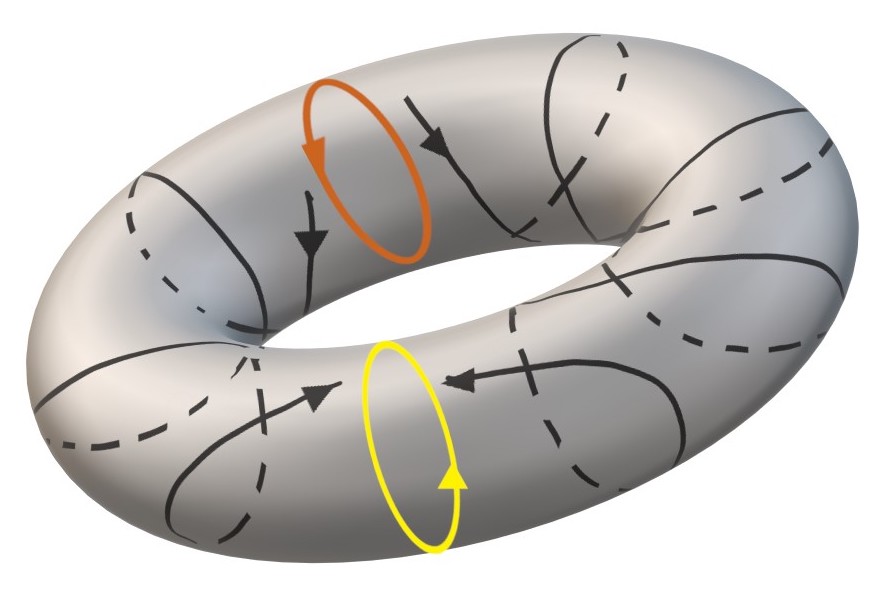} 
\end{wrapfigure} 
$\partial_2 O^1_{1}= 2. O'^1_{0}= 0$  therefore $O^1_{1}$ is the generator for $H_2(M, \mathbb{Z}_2)$.\\

$\partial_1 O^0_{1}= 2. O'^0_{0}= 0$ so  $O^0_{1}$ is a generator for  $H_1(M, Z)$. Also $\partial_1 O'^1_{0}= 0$ and therefore  $O'^1_{0}$ is another generator for $H_1(M, \mathbb{Z}_2)=0$.\\
Finally $\partial_0  O'^0_ {0}= 0$ and therefore we have one generator for  $H_0(M, \mathbb{Z}_2)$. \\ \\

  \begin{remark}   
  For a computation of  the homology groups of the first two examples via Morse- Bott theory (after turning the periodic orbits into critical submanifolds of a gradient flow), see \cite{Banyaga}. 
   \end{remark}

\section{Combinatorial Vector Fields}
 \subsection{Preliminaries}
 Forman introduced the notion of a combinatorial dynamical system on CW
 complexes \cite{Forman2}. He developed discrete Morse theory for the gradient
 vector field of a combinatorial Morse function and studied the homological
 properties of its dynamic \cite{Forman} . For the general combinatorial
 vector fields where as opposed to gradient vector fields, the chain recurrent set might also include closed
 paths, he studied some homological properties  by
 generalizing the combinatorial Morse inequalities. It remains, however, to
 construct a Floer type boundary operator for these general combinatorial
 vector fields.  We define a Morse-Floer boundary operator
 for combinatorial vector fields on a finite simplicial complex. With this
 tool  we no longer need a Morse function to compute the Betti numbers of the complex.      
  Combinatorial vector fields can be considered as the combinatorial version
  of smooth Morse-Smale dynamical systems on finite dimensional manifolds;
  here in contrast to the smooth case we cannot have homoclinic points and
  homoclinic orbits as here, we cannot have  a continuous bifurcation between
  a pair of heteroclinic orbits and a closed one, and in particular none with
  a homoclinic orbit in the middle.\\
  We now recall some of the main definitions that Forman introduced. {Let $M$ be a finite CW complex of dimension $m$, with $K$ the set of open cells of $M$ and $K_p$ the set of cells of dimension $p$. If $\sigma$ and $\tau$ are two cells of $M$, we write $\sigma_p$ if $\dim(\sigma) = p$, and $ \sigma < \tau$  if $\sigma \subseteq \overline{\tau}$ where $\overline{\tau}$ is the closure of $\tau$ and we call $\sigma$ a face of $\tau$.} \\
 Suppose $\sigma_p$ is a face of $\tau_{p+1}$, $B$  a closed ball of dimension
 $p+1$ and $h: B\rightarrow M$  the characteristic map for $\tau$ i.e.,  a
 homeomorphism from the  interior of $B$ onto $\tau$.  
 \begin{definition}
  $\sigma_p$  is a \emph{regular face} of $\tau_{p+1}$ if 
   \begin{itemize}
       \item $h^{-1} (\sigma) \rightarrow \sigma $ is a homeomorphism.
     \item  $\overline{h^{-1} (\sigma)}$ is a closed $p$-ball.
    \end{itemize}
     \end{definition}
     
  Otherwise we say $\sigma $ is an \emph{irregular face} of $\tau$.  If $M$
    is a regular CW complex (such as a simplicial or a polyhedral complex) then all its faces are regular.

\begin{definition}
A \emph{combinatorial vector field} on $M$ is a map $V : K \rightarrow K \cup  {0}$
such that
    \begin{itemize}
    \item For each $p$, $V(K_p) \subseteq  K_{p+1} \cup  {0}$. 
     \item For each $\sigma_p \in K_p $, either  $V(\sigma)= 0 $ or  $\sigma$ is a regular face of  $V(\sigma)$.
 \item If $\sigma \in \mathrm{Image}(V)$ then $V(\sigma) = 0$.
  \item For each $\sigma_p \in K_p $ \\
  $\sharp \lbrace u_{p-1} \in K_{p-1} \mid V(u)= \sigma \rbrace \le 1$.  
    \end{itemize}
    \end{definition}
To present the vector field on $M$ for any $\sigma \in K$ where $V(\sigma) \neq 0 $  we usually draw an arrow on $M$ whose tail begins at $\sigma$ and  extend this arrow  into $V (\sigma)$. Thus, for each cell $\sigma_p$, there are precisely 3 disjoint possibilities:
    \begin{itemize}
    \item $ \sigma$ is the head of an arrow $(\sigma \in Image(V))$.
     \item  $ \sigma$ is the tail of an arrow ($V(\sigma) \neq 0)$.
 \item $ \sigma$ is neither the head nor the tail of any arrow $(V (\sigma) = 0$ and $\sigma \not\in \mathrm{Image}(V ) $; 
     \end{itemize}
 In the last case we call such a $\sigma_p$ a zero or rest point of $V$ of index $p$.     
Cells which are not rest points occur in pairs ($\sigma,V(\sigma)$) with $ \dim V (\sigma) = \dim \sigma + 1$. 
From now on and for simplicity we restrict ourselves to the special case of simplicial complexes, instead of CW complexes.
 As the combinatorial version of closed periodic orbits in smooth manifolds we have the next definition: 
\begin{definition}
Define a $V$-\emph{path} of index $p$ to be a sequence \\  
$ \gamma : \sigma_p^0,\tau^0_{p+1},\sigma_p^1,\tau_{p+1}^1,...,\tau^{r-1}_{p+1},\sigma^{r}_p $

such that for all $ i = 1, . . . , r-1: $ \\
 $ V(\sigma^i)= \tau^i $ and $\sigma^i \neq \sigma^{i+1} < \tau^i$. \\
A \emph{closed path} $ \gamma$ of length $r$ is  a $V$-path such 
that $\sigma_p^0=\sigma_p^r$. Also $ \gamma$  is called  \emph{non-stationary} if $r > 0$.
\end{definition}
Forman showed that there is an equivalence relation on the set of closed paths by considering 
two paths $\gamma$ and $ \gamma'$ to be  equivalent if $ \gamma$ is the result of varying the starting point of $\gamma'$. An equivalence class of closed paths of index $k$ will be called a \emph{closed orbit} of index $k$ and denoted by $O_k$.
\begin{definition}
 We call an orbit $O_p$ non-twisted (or simple) if in its corresponding closed path \\
 $ \gamma : \sigma_p^0,\tau^0_{p+1},\sigma_p^1,\tau_{p+1}^1,...,\tau^{r-1}_{p+1},\sigma^{r}_p =\sigma_p^0 $, \\ 
 there exist an assignment of orientations on its $p+1$-simplexes such that the induced orientation of every two $p+1$-simplex on their common $p$-face cancel each other (namely they are opposite). Otherwise we call it twisted.  A discrete cylinder and torus have respectively simple $0$-orbit(s) and $1$-orbit(s) and discrete Mobius strip and  Klein bottle  have respectively a $0$-twisting orbit and a $1$-twisting orbit.
\end{definition}

\begin{definition}
The \emph{combinatorial chain recurrent set} $R(V)$ for a combinatorial vector field $V$  on $M$ is defined to be the set of simplices $\sigma_p$ which are either rest points of $V$ or are contained in some non-stationary closed path $\gamma$ ($ \gamma$
must have index either $p-1$ or $p$).

   \end{definition}
   
The chain recurrent set can be decomposed into a disjoint union of basic sets
$R(M)=\cup_{i} \Lambda _i$
where two simplices $ \sigma, \tau \in R(V)$ belong to the same basic set if
and only if there is a closed non-trivial $V$-path $ \gamma$ which contains
both $\sigma$ and $\tau$. Forman proved that if there are no non-stationary
closed paths, then $V$ is the combinatorial negative gradient vector field of
a combinatorial Morse function. However when $V$ has closed paths, then it
cannot be the gradient of a function. Subsequently he defined a combinatorial
''Morse-type'' function on $K$, called a  Lyapunov function, which is constant on each basic set, and has the property that, away from the chain recurrent set, $V$ is the negative gradient of $f$. 
\begin{remark}
This Lyapunov function can be considered as the combinatorial analogue of the Morse-Bott energy function which Mayer defined for Morse-Smale dynamical systems.
\end{remark}

\subsection{The chain complex of combinatorial vector fields}
Forman obtained Morse-type inequalities based on the basic sets of $V$ and showed that these sets  control the topology of $M$ \cite{Forman2}. 
In this section, we present  a direct way of recovering the homology of the underlying complex from the chain recurrent set of a combinatorial vector field on $M$ from our Floer type boundary operator;{  our main restriction is that the chain recurrent set should not have twisted orbits}. Our operator acts on chain groups generated by the basic sets and counts the number of suitable V-paths between elements of the chain recurrent set.
We consider $V$ to be a combinatorial vector field on a finite simplicial
complex $M$ { where $R(V)$ does not include twisted orbits}. \\
We define the Morse-Floer complex of $V$ denoted by $(M, C_*(V), \partial)$ as follows. Let $C_k$ denote the finite vector space (with coefficients in $\mathbb{Z}_2$) generated by the  set of rest points $p_k $ and closed orbits $O_{k-1}$ of the vector field:  \\ 
	\[ \left(  p_k,  O^1_{k-1},  O^0_ {k}  \right) \]
in which by $O^1_{k-1}$ we mean the whole closed orbit $O_{k-1}$ of index
${k-1}$ and by $O^0_ {k}$ we mean an arbitrary simplex with dimension $k$ in
the closed orbit $O_k$. Similar to the smooth case, each such orbit carries
topology in two adjacent dimensions, namely a closed orbit $O_k$ generates an
element $O^1_{k}$ in $C_{k+1}$ and an element $O^0_{k}$ in $C_{k}$. We note
  that here, by definition of combinatorial vector fields, we do not have any
  $V$-path between the elements of the same $C_k$; but in order to get a Floer
  type boundary operator in the same way as  in the smooth setting we have to exclude three different cases in our vector field; we assume:
 \begin{enumerate}
 \item There is no $V$-path from a face of a critical simplex $p_k$ to a $(k-1)$-dimensional simplex % , denoted by $O^0_{k-1}$,
    in an orbit $O_{k-1}$.
 \item  There is no $V$-path from a face of a $k$-dimensional simplex of an orbit of index $k-1$ to a critical simplex of dimension $k-1$.
  \item There is no $V$-path from a face of a $k$-dimensional simplex of an
    orbit % $O_{k-1}$
    of index $k-1$ to a $(k-1)$-dimensional simplex of another
    orbit % $O'_{k-1}$
    of index $k-1$.
 \end{enumerate}
   This will be used in the proof of Thm. \ref{2}. In the smooth setting, the
   excluded cases cannot occur because of the  Morse-Smale transversality condition  and the simpleness of the system (defined in definition \ref{3}).  
 \\

 To be able to define the combinatorial Floer-type  boundary operator,  we
 have to transfer the idea of the number of connected components of the moduli
 spaces of flow lines to our combinatorial setting. As we saw, the number of
 these components (mod 2) plays a key rule in the definition of the boundary
 operator in the smooth setting. In the sequel, for two simplices of the
 same dimension $q$ and $q'$,  by $ q \perp q'$, we mean that $q$ and $q'$ are
 lower adjacent, i.e.,  they have a common face.  \\
 We have $V$-paths between closed orbits and rest points which make different following cases: \\
For two orbits $O_ {k-1}$ and $ O_ {k-2}$ we define the set $VP(O_ {k-1} , O_
{k-2})$  as the set of all V-paths starting from the faces of $(k-1)$- and
$k$-dimensional simplices of $O_ {k-1}$ and go to respectively $(k-2)$- and $(k-1)$-
dimensional simplices of $O_ {k-2}$. \\
If  $VP(O_ {k-1} , O_ {k-2})$  is non-empty, for $O^1_{k-1}$ and $O^1_{k-2}$, we define the higher dimensional spanned set of V-paths in  $VP(O_ {k-1} , O_ {k-2})$, denoted by $SVP(O^1_ {k-1} , O^1_ {k-2})$ to be   
 $$ \lbrace q \in K_{k},  q \in Image (V) \mid
 \exists \gamma \in VP(O_ {k-1} , O_ {k-2}), q \in \gamma   \rbrace.$$
 On this set we can then define a relation as follows. 
We say $q$ and $q'$ in $SVP(O^1_ {k-1} , O^1_ {k-2})$ are related ($q\sim q'$) if $q$ and $q'$ belong respectively to two V-paths $ \gamma : \alpha_{k-1}^0,...,q_k,...,\alpha^{r}_{k-1} $ and $\gamma' : \beta_{k-1}^0,...,q'_k,...,\beta^{s}_{k-1}$ where $\alpha_{k-1}^0$ and $\beta_{k-1}^0$ are faces of $k$-dimensional simplices of $O_ {k-1}$ and $\alpha_{k-1}^r$ and $\beta_{k-1}^s$ are some $k-1$  dimensional simplices in $O_ {k-2}$ such that one of the following situations happens:    
 \begin{itemize}
 \item Either $\alpha_{k-1}^0$ and $\beta_{k-1}^0$ coincide (and therefore $ \gamma$ and $ \gamma'$ are the same)  or
 \item $ \alpha_{k-1}^0\perp \beta_{k-1}^0 $ or
 \item  There is a sequences of  $k-1$ dimensional simplices $\theta_{k-1}^0,... \theta_{k-1}^z$, where  $\theta_{k-1}^0,... \theta_{k-1}^z$  are the faces of $k$ dimensional simplices in $O_ {k-1}$ such that $\alpha_{k-1}^0 \perp \theta_{k-1}^0$, $\beta_{k-1}^0 \perp \theta_{k-1}^z$ and  for each $i$, $\theta_{k-1}^i \perp \theta_{k-1}^ {i+1} $ and $\theta_{k-1}^i $ is the starting simplex of some $ \gamma \in VP(O_ {k-1} , O_ {k-2})$. 
 \end{itemize}
  It is straightforward to check that $\sim$ is an equivalence relation on $SVP(O^1_ {k-1} , O^1_ {k-2}) $. \\ 
On the other side for two arbitrary simplices of dimension $k-1$ and $k-2$ in respectively $O_ {k-1}$ and $O_ {k-2}$ we consider the following  equivalence relation( $\sim'$) on  $SVP(O^0_ {k-1}, O^0_ {k-2})$ which is defined as follows: 
\begin{equation*}
 \lbrace q \in K_{k-1}, q \in \small{Image (V)}  \mid  \exists \gamma \in VP(O_ {k-1} , O_ {k-2}),  q \in 
 \gamma \rbrace.
\end{equation*}
 We say $q$ and $q'$ in $SVP(O^0_ {k-1} , O^0_ {k-2})$ are related ($ q \sim' q'$) if there are $w$ and $w'$ in $SVP(O^1_ {k-1} , O^1_ {k-2})$ such that $q<w$ and $q'<w'$ and $w\sim w' $. By definition $\sim'$ is also an equivalence relation on $SVP(O^0_ {k-1} , O^0_ {k-2})$ and the number of its equivalence classes is exactly the number of equivalence classes of $\sim$ over $SVP(O^1_ {k-1} , O^1_ {k-2})$. \\ 
For instance consider the following triangulation of the torus which has two
closed orbits of index one and index zero, respectively shown by green and red
arrows. Here, $SVP(O^1_ {k-1} , O^1_ {k-2})$ is the set of all the two
dimensional coloured simplices and based on the above equivalence relation, this set is partitioned into two sets of yellow and pink two dimensional simplices. Also  $SVP(O^0_ {k-1} , O^0_ {k-2})$ is the set of all marked (with cross sign) edges which is partitioned into two sets, represented by orange and purple signs.

 \begin{center}
\includegraphics[width=4 cm]{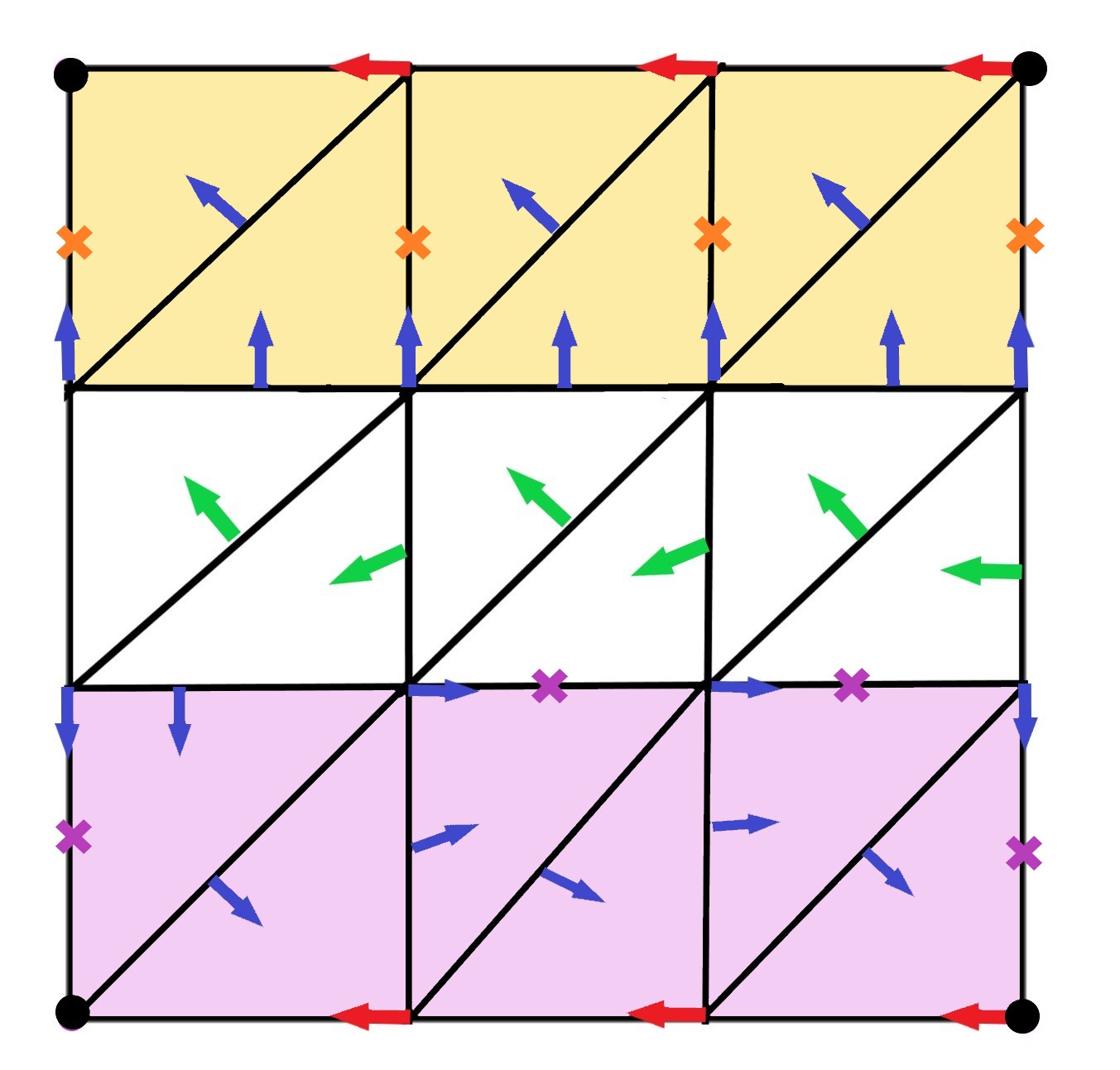} 
\end{center}
 
If for two orbits, $O_ {k-1}$ and $O_ {k-2}$,  $VP(O_ {k-1} , O_ {k-2})$ is empty and some of the faces of $O_ {k-1}$ (faces of both $k-1$ and $k$-dimensional simplices) coincide with $k-2$ and $k-1$ dimensional simplices in $O_ {k-2}$, we say $O_ {k-2}$ is attached to $O_ {k-1}$. % and we call it \emph{attachment};
In the tetrahedron shown below the bottom faces of the closed red orbit of
index one,  is the closed orbit of index zero with purple arrows:
 
 \begin{center}
\includegraphics[width=4 cm]{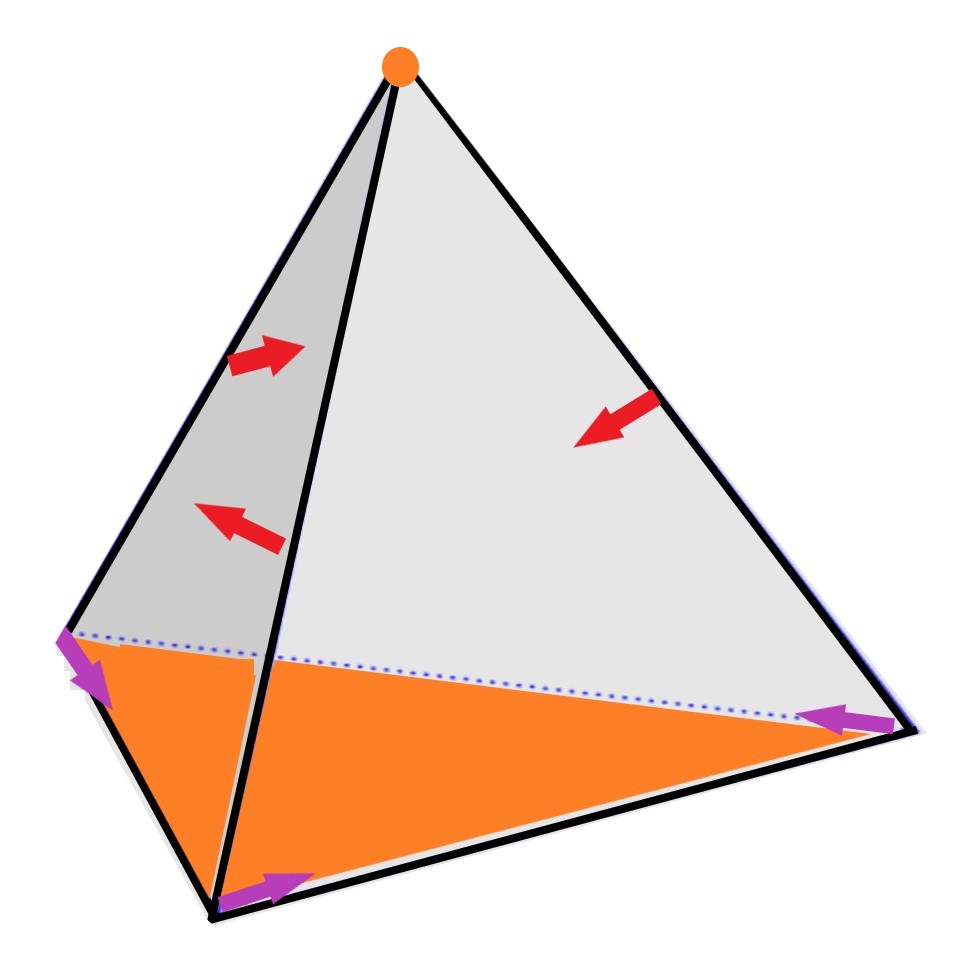} 
\end{center}

Also we could have V-paths, $VP(p_ k , O_{k-2})$,  from the faces of a
critical simplex $p_k$ of index k which go to the $k-1$ dimensional simplices
of some orbit of index $k-2$ ; we define the span set of these V-paths,
denoted by $SVP(p_ {k} , O^1_ {k-2})$ to be
$$SVP(p_ {k} , O^1_ {k-2}): = \lbrace q \in K_{k}, q \in Image (V)  \mid
\exists \gamma \in VP(p_ {k} , O_ {k-2}), q \in \gamma   \rbrace.$$
As above we can define an equivalence relation on this set in which the equivalence classes are obtained based on the following relation:  \\ 
$q\sim q'$ if they belong respectively to two V-paths $ \gamma : \alpha_{k-1}^0,...,q_k,...,\alpha^{r}_{k-1} $ and $\gamma' : \beta_{k-1}^0,...,q'_k,...,\beta^{s}_{k-1}$ such that either $\alpha_{k-1}^0$ and $\beta_{k-1}^0$ coincide or  $ \alpha_{k-1}^0\perp \beta_{k-1}^0 $ or there is a sequence of  $k-1$ dimensional simplices $\theta_{k-1}^0,... \theta_{k-1}^z$, where  $\theta_{k-1}^0,... \theta_{k-1}^z$  are the faces of $p_k$ such that $\alpha_{k-1}^0 \perp \theta_{k-1}^0$, $\beta_{k-1}^0 \perp \theta_{k-1}^z$ and  for each $i$, $\theta_{k-1}^i \perp \theta_{k-1}^ {i+1} $. We note that here, $\alpha_{k-1}^0$ and $\beta_{k-1}^0$ are faces of $p_k$ and $\alpha_{k-1}^r$ and $\beta_{k-1}^s$ are some $k-1$ elements of $O_ {k-2}$. \\

Also for $V$ on $M$,  for some rest point $p_k$ and some closed orbit
$O_{k-2}$,  the faces of $p_ {k}$ and $k-1$ dimensional simplices in $ O_
{k-2}$ might coincide; for instance in the above tetrahedron the faces of orange 2-d
rest simplex coincides with the one dimensional simplices in the closed orbit
of index zero with purple arrows. We consider this again as an \emph{attachment}. \\
 
In the third possible case, V-paths start from the faces of $k$-dimensional
simplices of a closed orbit of index $k$, $O_k$, and go to a rest simplex of
index $k-1$, $p_{k-1}$. We denote the set of such V-paths by  $VP(O_ {k} , p_
{k-1})$  and we consider $SVP(O^0_ {k} , p_ {k-1}): = \lbrace q \in K_{k}, q
\in Image (V)  \mid  \exists \gamma \in VP(O_ {k} , p_ {k-1}), q \in \gamma
\rbrace$. In this set we call two simplices $q$ and $q'$  equivalent if either
$q$ and $q'$ coincide or  $q \perp q'$ or we can find a sequence of simplices
in $SVP(O^0_ {k} , p_ {k-1})$  such as $\theta_{k}^0,... \theta_{k}^z$,
such that $q \perp \theta_{k}^0$, $q' \perp \theta_{k}^z$ and for each
$i$, $\theta_{k}^i \perp \theta_{k}^ {i+1} $. Here we have to exclude $p_
{k-1}$ for determining lower adjacency of $k$-dimensional simplices in
$SVP(O^0_ {k} , p_{k-1})$, namely if $ q \cap q' = p_ {k-1} $, they belong to
different classes.  As an example consider the following triangulation for the
torus with four orange rest simplices, one of index two, two of index one and another
one of index zero, and a closed red orbit of index one. Here, the edges marked
by cross signs are the edges in  $SVP(O^0_ {1} , p_ {0})$, which is portioned into two pink and yellow marked edges. 
 {If $VP(O_ {k} , p_{k-1})$ is empty, but $O_k$ and $p_ {k-1}$ have a non-empty intersection, we have another type of attachment. For an example of this case see the top critical vertex and the red $O_1$ in the above tetrahedron.
 }
 \begin{center}
\includegraphics[width=4 cm]{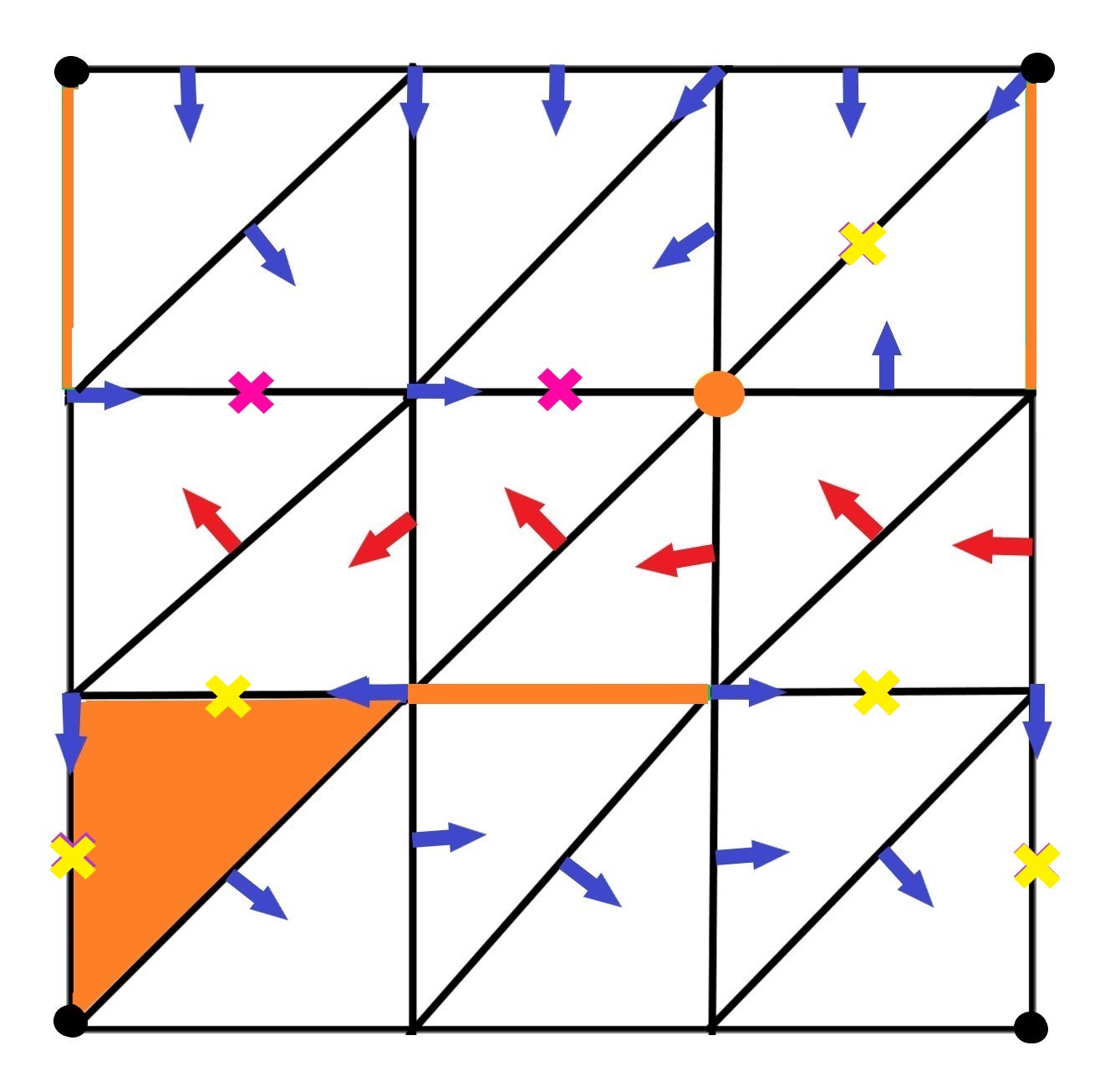} 
\end{center}
   
Finally if we substitute $O_ {k}(O^0_ {k})$ in the third case by a rest point of index k, $p_k$, we count the number of equivalence classes of $SVP(p_ {k} , p_ {k-1}): = \lbrace q \in K_{k}, q \in Image (V)  \mid  \exists \gamma \in VP(p_ {k} , p_ {k-1}), q \in \gamma \rbrace$ where  $VP(p_ {k} , p_ {k-1})$ is the set of all V-paths starting from the faces of $p_k$ and go to $p_ {k-1}$ by passing through $k$-dimensional simplices, based on the following relation:  \\  
   We say $q$ and $q'$ in $SVP(p_ {k} , p_ {k-1})$ are related ($q\sim q'$) if there is a V-path in $VP(p_ {k} , p_ {k-1})$ which includes both $q$ and $q'$. Therefore the number of equivalence classes here is  the number of V-paths starting from the faces of $p_k$ which go to  $p_ {k-1}$.\\  
The differential $\partial_k : C_k(V) \longrightarrow C_{k-1}(V)$  counts the number of the above equivalence classes mod 2, denoted by $\alpha$, and for the three types of attachments we consider $\alpha= 1$.   
That is,
\begin{eqnarray*}
\partial p_k &=& \sum \alpha (p_k, p_{k-1})  p_{k-1} + \sum  \alpha (p_k,  O^1_{k-2})   O^1_{k-2}\\ \partial  O^0_{k}&=& \sum \alpha ( O^0_{k},  O^0_{k-1})  O^0_{k-1}+  \sum \alpha ( O^0_{k}, p_{k-1}) p_{k-1} \\  \partial  O^1_{k-1}&=& \sum \alpha( O^1_{k-1},  O^1_{k-2})  O^1_{k-2}  
\end{eqnarray*}
where the sums extend over all the elements on the right hand side; for instance the second sum in the first line is over all closed orbits $O^1_{k-2}$ of index $k-2$. In Forman's discrete Morse theory where there is no closed orbit (and therefore the combinatorial vector field is the negative gradient of a discrete Morse function), $\alpha (p_k, p_{k-1})$  is the number of gradient V-paths from the faces of the rest point $p_k$ of higher dimension to the rest point of lower dimension $p_{k-1}$ (in this case all the coefficients in the above formula except the first coefficient in the first line are zero). \\

  \begin{theorem}\label{2}
  $\partial^ 2 = 0$.
   \end{theorem}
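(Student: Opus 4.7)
The plan is to mirror the proof of Theorem \ref{1} in the combinatorial setting. I will replace each closed orbit $O_{k-1}$ of $V$ by a pair of critical simplices---an ``upper'' critical $k$-simplex $q^{up}_k$ and a ``lower'' critical $(k-1)$-simplex $q'^{down}_{k-1}$---obtained by breaking one matching pair $(\sigma, V(\sigma))$ in the closed V-path and declaring its endpoints to be new rest simplices. Because the chain recurrent set was assumed to contain no twisted orbits, this breaking is unambiguous and yields a new combinatorial vector field $V'$ whose only recurrent elements are isolated rest simplices. By Forman's theorem \cite{Forman}, $V'$ is then (up to the choice of a Lyapunov function) the negative gradient of a discrete Morse function, and the usual Forman--Floer boundary $\partial^M$ satisfies $(\partial^M)^2 = 0$.

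Next, in analogy with steps 1--3 of the proof of Theorem \ref{1}, I will introduce a chain complex $(C_*(V'), \partial')$ whose degree-$k$ part is generated by the partitioned set $(p_k, q^{up}_k, q'^{down}_k)$, with $\partial'$ counting V-paths of $V'$ (mod $2$) between these labeled generators. The three exclusion conditions in the definition of our combinatorial vector field play exactly the role of the Morse--Smale transversality condition in the smooth proof: they ensure that the potentially extra coefficients of $\partial^M$ (such as $\alpha(q^{up}_k, q'^{down}_{k-1})$ from distinct orbits, or $\alpha(q^{up}_k, p_{k-1})$) vanish. In particular, each pair $(q^{up}_k, q'^{down}_{k-1})$ produced from a single broken orbit is connected by exactly two V-paths (the two fragments of the original closed path on either side of the break), contributing $0$ mod $2$, while the exclusion assumptions rule out all other unwanted coefficients. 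Thus $\partial' = \partial^M$ and $(\partial')^2 = 0$.

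Finally, I will define the isomorphism $\varphi_* \colon C_*(V) \to C_*(V')$ by $\varphi_*(p_k) = p_k$, $\varphi_*(O^0_k) = q'^{down}_k$, $\varphi_*(O^1_{k-1}) = q^{up}_k$, and verify the chain-map identity $\partial' \varphi_* = \varphi_* \partial$ on each generator; the conclusion $\partial^2 = \varphi_*^{-1}(\partial')^2 \varphi_* = 0$ then follows at once. The main obstacle---and what makes the combinatorial argument genuinely different from the smooth one---is showing that the equivalence classes of the relations $\sim$ and $\sim'$ on the span sets $SVP$ correspond bijectively to the V-paths of $V'$ between the corresponding replacement critical simplices. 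The relations are in fact designed precisely for this: two V-paths in $SVP(O^1_{k-1}, O^1_{k-2})$ lie in the same class exactly when, after breaking both orbits, they can be joined into a single V-path of $V'$ that passes through the inserted fragments, and the chain of lower-adjacent $(k-1)$-simplices $\theta_{k-1}^0, \dots, \theta_{k-1}^z$ appearing in the definition of $\sim$ is what realizes this joining combinatorially. The attachment cases, where $VP$ is empty but closures meet, are handled by the convention $\alpha = 1$, which must be checked by direct local inspection to match the single V-path of $V'$ created by the break. Once this bijection is established for each of the four types of span sets, $\partial' \varphi_* = \varphi_* \partial$ holds on every generator and the theorem follows.
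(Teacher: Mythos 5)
Your proposal follows essentially the same route as the paper: replace each closed orbit by a critical pair $(q^{up}_k, q'^{down}_{k-1})$ to obtain a gradient field $V'$, identify $\partial'$ with Forman's $\partial^M$ by using the three exclusion conditions to kill the unwanted coefficients (with the two paths from a broken orbit cancelling mod $2$), and transport $(\partial')^2=0$ back through the isomorphism $\varphi_*$. The only difference is cosmetic --- you delete a single matched pair $(\sigma, V(\sigma))$ whereas the paper reverses the arrows along a segment of the closed path before freeing $\sigma^0_p$ and $\tau^k_{p+1}$ --- and both surgeries yield the same two critical cells and the same mod-$2$ path counts, so the argument goes through as in the paper.
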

To prove this theorem similarly to Theorem \ref{1} in the smooth case, we
introduce a procedure to replace  any closed path of index $p$
(correspondingly its orbit of index $p$, $O_{p})$  with a rest  point of index
$p$ and one of index $p+1$ which are joined by two gradient V-paths starting
from the faces of a higher dimensional rest point and going to the lower dimensional rest point.  \\   	
We assume $V$ has a finite number of simple closed non-stationary paths (orbits) and rest simplices. Choose arbitrarily one of these closed paths $\gamma$ of index $p$,  $ \gamma : \sigma_p^0,\tau^0_{p+1},\sigma_p^1,\tau_{p+1}^1,...,\tau^{r-1}_{p+1},\sigma^{r}_p = \sigma_p^0 $. $\gamma$ is a sequence of $p$ and $(p+1)$ dimensional simplices. Take one of the $p+1$ dimensional simplices $ \tau^{k}_{p+1}$ where $ k\neq r-1$. (Note that for non-stationary closed paths such a  $\tau^{k}$ always exists). We consider the following two sets of the simplices of $\gamma$ by preserving the orders in each of the sets: \\ 
  \begin{center}
 $\sigma^{0}_p,...., \tau^{k-1}_{p+1}, \sigma^{k}_p,  \tau^{k}_{p+1} $  and \\ 
 $\tau^{k}_{p+1}, \sigma^{k+1}_p, \tau^{k+1}_{p+1}, ...., \sigma^{r}_p = \sigma_p^0 $   
  \end{center}
 where the union of the elements in these sets consists of  all the simplices of $\gamma$ and their intersection is the starting simplex of the closed path $ \sigma_p^0 $ and the one of higher dimension that we took $ \tau^{k}_{p+1}$.  

We keep the arrows in the second set as they are in $\gamma$ and in the first set we reverse the direction of V-path from $ \sigma_p^0 $ to $ \tau^{k}_{p+1}$.  Namely instead of a pair $(\sigma^{s}_p, \tau^{s}_{p+1}) $ in $\gamma$ (for $ 0\leq s \leq k$)  we will have $( \sigma^{s}_p, \tau^{s-1}_{p+1})$ in our vector field where the two simplices $ \sigma_p^0 $ and $\tau^{k}_{p+1} $ will no longer be the tail and head of any arrow; therefore by definition both of them become rest points  and there is no other rest point in $ \gamma$  created in this process. We note that in this procedure we just change the arrows in $O$ and the other pairs of the vector field (outside $O$) are not changed. 
        
    \begin{proof} of theorem \ref{2}.
     If by the help of above procedure we replace all the closed paths (orbits) by two rest points whose indices (dimensions) differ by one we get a  vector field $V'$ which has no closed path (orbit) and therefore there is a discrete Morse function on $M$ whose gradient is $V'$. $V'$ has all the rest  points of $V$ and two rest points $q^{up}_k$ (the simplex of higher index) and $q'^{down}_{k-1}$ (the simplex of lower index) instead of every orbit $O_{k-1}$ of index $k-1$. Then we have the following three steps to prove the theorem: 
  \begin{itemize}
  \item[1.] We consider $C_k(V')$  to be the finite vector space (with coefficients in $\mathbb{Z}_2$) generated by 
   	\[ \left(  p_k,  q^{up}_k,  q'^{down}_{k}  \right) \]
 where in this  set  $q^{up}_k$ comes from an orbit of index $k-1$ and  $q'^{down}_{k} $ comes from the replacement of an orbit of index $k$. 

    \item[2.] we  define a boundary operator $\partial'$ and consequently a chain complex corresponding to $(V',C_*(V'),  \partial')$. 
  \item[3.]  Then we prove there is an isomorphism (chain map) $\varphi * : C_*(V) \longrightarrow C_*(V')$ .
 \end{itemize}
    Since $\varphi *$ is an isomorphism we get our desired equality   $\partial^ 2 = 0$ as $\partial = \varphi ^{-1}  \partial'$ and  $\partial^ 2=\varphi ^ {-1}  {\partial'} ^ 2$.
 \begin{itemize}    
  \item[1.] All the
elements of the chain recurrent set of  $V'$ are rest simplices and for each
index $k$  they can be  partitioned into three different sets $p_k, q^{up}_k,
q'^{down}_k $. Here, in contrast  to the smooth case, orbits and rest points can have non-empty intersections; in particular for different types of attachments, the pairwise intersections are non-empty. However partitioning of rest simplices is possible since the indices of the rest simplices are the same as their dimensions and after converting orbits into two rest simplices, they will belong to different chain groups (in adjacent dimensions). 
    \item[2.] We define  $\partial' : C_k(V') \longrightarrow C_{k-1} (V')$ as
      follows: 
      \begin{eqnarray*}
	\partial' p_k &=& \sum \alpha (p_k, p_{k-1}) . p_{k-1} + \sum  \alpha(p_k, q^{up}_{k-1}) . q^{up}_{k-1} \\ \partial' q'^{down} _{k}&=& \sum \alpha (q'^{down} _{k}, q'^{down} _{k-1}) .q'^{down} _{k-1}  \\ &+&  \sum \alpha (q'^{down} _{k}, p_{k-1}) .p_{k-1} \\ \partial'q^{up} _{k}&=& \sum \alpha (q^{up} _{k}, q^{up} _{k-1}) . q^{up} _{k-1} \\ 
    \end{eqnarray*}
    \end{itemize}

	To prove $\partial'^ 2 = 0$ over $C_k(V')$, we want to equate
        $\partial'$ with the discrete Morse-Floer  boundary operator
        $\partial^M$  of a combinatorial gradient vector field of the form
        $\partial^M (s_k) = \sum \alpha (s_k, s_{k-1}) s_{k-1} $. There we
        count the number of gradient V-paths $\alpha$ (mod 2) between two rest
        points of relative index difference one without any such partitioning
        on the set of rest simplices $s_k$ of index $k$. In our case where we have such kind of partitioning we should show that for all the generators of $C_*(V')$,  $\partial' = \partial^M$.  \\
After the preceding procedure, we have: \\ 
   \begin{eqnarray*}
	\partial^M (p_k) &=& \sum
\alpha (p_k, p_{k-1}) . p_{k-1} +  \sum \alpha (p_k, q^{up}_{k-1})
. q^{up}_{k-1} \\ &+&  \sum \alpha (p_k, q'^{down}_{k-1}) . q'^{down}_{k-1} ;
    \end{eqnarray*} 
comparing this formula with that of  $\partial' p_k $ in the above formula we
see that there is one extra term in the latter; {because we exclude case (1) in our vector field, the third sum is not present in the former case. } \\
As in the previous discussion to have  $ \partial^M(q^{up} _{k}) = \partial'(q^{up} _{k})$, the following two coefficients should be zero: \\ 
	$\alpha (q^{up} _{k}, q'^{down} _{k-1}), \alpha (q^{up} _{k},
        p_{k-1})$. In the first case if $q^{up} _{k}$ and $q'^{down} _{k-1}$ are coming from replacement of the same orbit $O_{k-1}$, we will have exactly two $V'$-paths from the faces of $q^{up} _{k}$ to  $q'^{down} _{k-1}$ and it is zero mud 2. If they are not obtained from replacement of the same orbit $O_{k-1}$, $\alpha$ is zero as {  otherwise  in $V$ we would have $V$-paths between two orbits of the same index which either contradicts the non-existence of $V$-paths between elements of the same chain group or violates our exclusion (3) on the vector field. On the other hand, the second $ \alpha$ is zero as otherwise it violates our assumption (exclusion 2) on the vector field}. \\ Finally $ \partial^M(q'^{down} _{k}) =   \partial'(q'^{down} _{k})$ if $\alpha (q'^{down} _{k}, q^{up} _{k-1}) $ is zero; if not, we would have two closed orbits $O$ and $O'$ in $V$ such that the faces of $O$ are connected to $O'$ by some V-paths and their indices differ by two which is not possible.  \\ Therefore on $C_*(V)$,  $\partial^M=  \partial' $ and $\partial'^ 2 = 0$ since by Morse-Floer theory for combinatorial  gradient vector fields  $(\partial^M)^2 = 0$. 
      \item[3.] We define  $\varphi * : C_*(V) \longrightarrow C_*(V')$ as
        follows. For $0\le k \le m$, 
     \begin{eqnarray*}
	\varphi_*( p_k)= p_k,\quad \varphi_*  (O^0_ {k}) = q'^{down}_k ,\quad \varphi_*( O^1_ {k-1})= q^{up}_k 
 \end{eqnarray*}      
     
	$\varphi_* $ is an isomorphism by the  above partitioning method for the set of rest points of $V'$. To prove $\varphi_*$ is a chain map  from  $C_*(V) $ to $ C_*(V')$ we should have $ \partial' \varphi_* = \varphi_*   \partial  $

Here, we show the equality for $O^1_ {k-1}$ and for the other generators of
$C_k(V)$  it can be similarly obtained:. 

\begin{equation*}
\begin{aligned}[t]
 \varphi_* \partial ( O^1_ {k-1}) 
    &=  \varphi_* (\sum \alpha( O^1_{k-1},  O^1_{k-2}) . O^1_{k-2}  \\
    &= \sum \alpha (q^{up} _{k}, q^{up} _{k-1}) . q^{up} _{k-1} \\
    &= \partial'q^{up} _{k}\\
    &= \partial' \varphi_* ( O^1_ {k-1}) 
  \end{aligned}
  \end{equation*}
   
In the second equality, $ \varphi_* $ preserves the parity of $\alpha( O^1_{k-1},  O^1_{k-2})$ since each equivalence class of $SVP(O^1_ {k-1} , O^1_ {k-2})$  corresponds to exactly one gradient V-path from  $q^{up} _{k}$ to $q^{up} _{k-1}$ (and one gradient V-path from  $q'^{down} _{k-1}$ to $q'^{down} _{k-2}$).  \\
Therefore  $\partial' \varphi_* = \varphi_*   \partial $ and  since $\partial^{'2} = 0$  and  $\partial^ 2 =\varphi_*^{-1} \partial^{'2} \varphi_*, \partial^ 2 = 0$
  \end{proof} 
  
  	  \begin{remark}  
For the three types of attachments in the above equality, after replacing orbits
with two rest simplices and two gradient V-paths between them, $\alpha(p_k,
q^{up}_{k-1})$, $ \alpha (q^{up} _{k}, q^{up} _{k-1})$, $\alpha (q'^{down}
_{k}, q'^{down} _{k-1})$ and $\alpha (q'^{down}
_{k}, p _{k-1})$ are also one. For instance, in the left tetrahedron
below, we have two orange rest simplices, one of index two $B_2$ at the bottom
and one of index zero at the top $T_0$ and two red and purple closed orbits of
indices one and zero. If we convert the red orbit into two rest simplices, marked with red crosses, one of index two $R_2$ and the other one of index one $R_1$ and similarly turn the purple orbit into two rest simplices, marked with purple crosses, one of index one $P_1$ and the other one of index zero $P_0$ (shown in the right figure), we have 
$ \alpha (B_2, P_1)=1$, $\alpha(R_2, P_1) = 1$, $\alpha (R_1, P_0) =1$ and $\alpha(R_1, T_0) = 1 $ (Also $\alpha(R_2, R_1) = 0 $, $\alpha (P_1, P_0) =0$).   \\

\begin{center}
\hspace{-0.2 cm}\vspace{-0.01 cm} \includegraphics[width=3.5 cm]{th.jpg} 
\hspace{0.4 cm}\vspace{-0.01 cm} \includegraphics[width=3.5 cm]{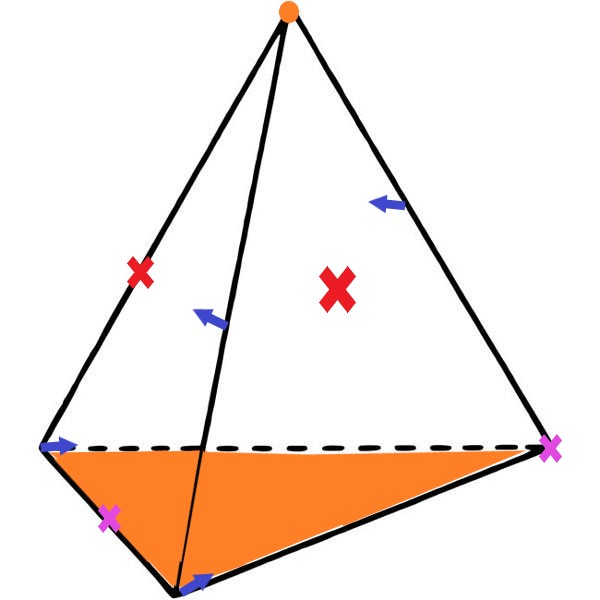} 
\end{center}
  	  
   \end{remark}    

We can then define the $\mathbb{Z}_2$ Morse-Floer homology of $M$, for each
$k, 0 \leq k \leq m $ by 
 \begin{center}
$H_k (M, \mathbb{Z}_2) = \frac {ker (\partial_k) }{image (\partial_{k+1})}$
\end{center}

\medskip

In  \cite{Forman2}  Forman proved Morse inequalities for general combinatorial
vector fields based on his combinatorial Morse type Lyapunov function. There
the main components are rest points and orbits in basic sets. Here we want to
present these inequalities in a much shorter way based on the idea of changing
every orbit of index $k-1$, $O_{k-1}$ to two rest points of index $k$ and
$k-1$ as above. The following result can be considered as the combinatorial
version of what Franks proved for  smooth Morse-Smale dynamical systems  \cite{Franks}. 
    \begin{theorem}
Let $V$ be a combinatorial vector field over a finite simplicial complex $M$ with $c_k$ rest points of index $k$ and $A_k$ orbits of index $k$. Then
$$c_k-c_{k-1}+....\pm c_0 + A_k \geq \beta_k- \beta {k-1}+... \pm \beta_0, $$
where $\beta_k = \dim H_k(M, \mathbb{Z}_2)$.
   \end{theorem}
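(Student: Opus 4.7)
The plan is to reduce the inequality to the classical strong Morse inequalities for discrete Morse functions (\cite{Forman}) by applying the replacement procedure from the proof of Theorem \ref{2}. Each orbit contributes a pair of rest simplices in two adjacent dimensions, so when one forms the alternating sum of critical point counts, the contributions from orbits telescope and leave a single extra $+A_k$ on the left-hand side.

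First I would apply the replacement procedure to every orbit of $V$: each orbit $O_j$ of index $j$ is replaced by a pair of isolated rest simplices, one of index $j+1$ and one of index $j$, joined by two gradient $V$-paths, while the vector field is left unchanged outside a small neighbourhood of the orbit. Performing this simultaneously (or one orbit at a time) for all orbits of $V$ produces a combinatorial vector field $V'$ on the same simplicial complex $M$ whose chain recurrent set consists only of rest simplices; by Forman's characterisation, $V'$ is then the combinatorial negative gradient of some discrete Morse function $f$ on $M$.

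Writing $c'_k$ for the number of rest simplices of $V'$ of index $k$, one reads off
\[ c'_k = c_k + A_{k-1} + A_k \qquad (\text{with } A_{-1}:=0), \]
because the orbits of index $k-1$ each contribute the higher-dimensional rest simplex of their replacement pair (namely of index $k$), and the orbits of index $k$ each contribute the lower-dimensional rest simplex of their pair (again of index $k$). Applying the classical discrete strong Morse inequalities of \cite{Forman} to $V'$ gives
\[ \sum_{i=0}^{k} (-1)^{k-i} c'_i \;\geq\; \sum_{i=0}^{k} (-1)^{k-i} \beta_i, \]
and substituting $c'_i = c_i + A_{i-1} + A_i$ a short reindexing shows that the $A$-contributions from consecutive indices cancel in pairs, leaving
\[ \sum_{i=0}^{k} (-1)^{k-i} c'_i \;=\; \sum_{i=0}^{k} (-1)^{k-i} c_i \;+\; A_k, \]
which is exactly the claimed inequality.

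The main obstacle is the first step: one must be sure that the local, orbit-by-orbit surgery described in the proof of Theorem \ref{2} can be performed on \emph{all} orbits of $V$ simultaneously without creating new closed $V$-paths or conflicts with the arrows outside the orbits, so that the resulting $V'$ really does have no non-stationary closed $V$-path and hence is the negative gradient of a discrete Morse function. Because each surgery is supported near its own orbit and the orbits are assumed simple (as in Theorem \ref{2}), this follows; once $V'$ is in hand, the rest is a purely combinatorial telescoping computation.
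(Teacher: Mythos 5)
Your proposal is correct and follows essentially the same route as the paper: replace every orbit of index $j$ by a pair of rest simplices of indices $j+1$ and $j$ to obtain a gradient field $V'$ with $c'_k = c_k + A_{k-1} + A_k$, then apply Forman's discrete Morse inequalities and observe the telescoping that leaves the single $+A_k$. The paper states this more tersely but with the identical count and reduction, so there is nothing to add.
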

     \begin{proof}
    We create a new vector field $V'$ over $M$ by replacing each closed orbit
    with two rest points as above. Since there is no closed orbit in $V'$,
    based on what Forman showed in \cite{Forman}, $V'$ is the gradient of some
    combinatorial Morse function on $M$. On the other hand, the indices of
    rest points do not change when turning $V$ to $V'$ and  $V'$ has $c'_k$
    rest points of index $k$ where $c'_k = c_k+ A_k+ A_{k-1} $. Applying the
    Morse inequalities for gradient vector fields  to $c'_k$ in $V'$  gives us the desired inequalities. 
     \end{proof}
     
    \begin{remark}  
 If a simplicial complex is obtained by triangulation of a non-orientable
manifold, we might not get the correct ($ \mathbb{Z}_2$) homology groups when
the chain recurrent set of our combinatorial vector field has non-stationary
closed V-paths. 
However for computing the $\mathbb{Z}_2$ homology, we can turn each orbit into two rest simplices and two V-paths between
them as above to get a combinatorial gradient vector field on the simplex and
use the classical discrete Floer-Morse theory. For instance, consider a triangulation of the Klein bottle which
has two closed orbits represented by red and blue arrows of
index one and zero,  respectively,  as shown in the left diagram below. We note that the red orbit is twisted. Here by turning
orbits into two rest simplices and two V-paths between them we get the correct
$\mathbb{Z}_2 $-homology of the Klein bottle  which is the same as the  $\mathbb{Z}_2$-homology of the triangulated torus as  $\mathbb{Z}_2$-homology cannot distinguish between orientable and non-orientable surfaces.

\begin{center}
\hspace{-1 cm}\vspace{-0.01 cm} \includegraphics[width=3.5 cm]{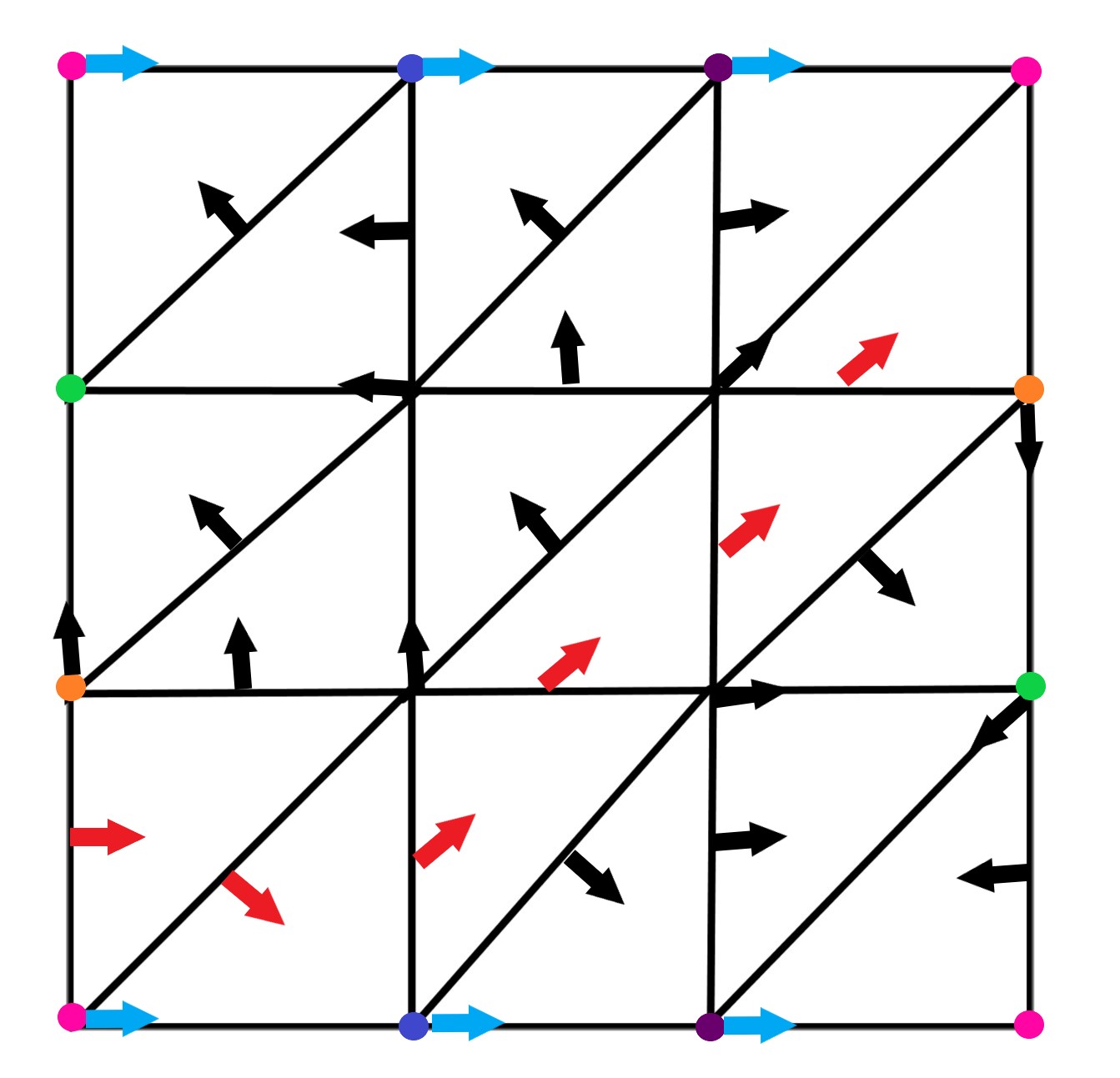} 
\hspace{1 cm}\vspace{-0.01 cm} \includegraphics[width=3.5 cm]{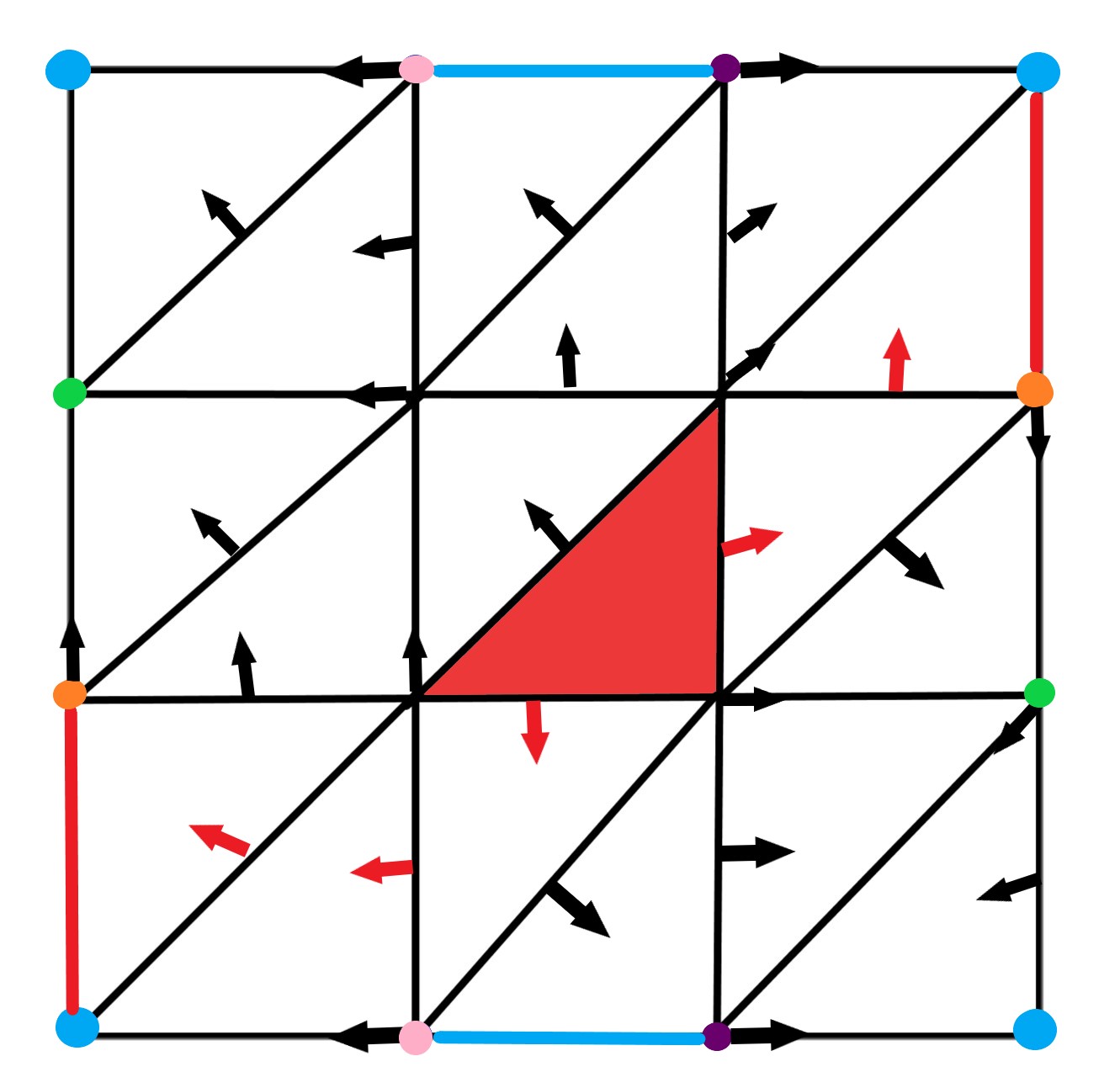} 
\end{center}
  	  
   \end{remark}   
         	  \begin{remark}  
  Orientability of a simplicial complex is not a necessary condition for
  defining the Floer boundary operator for  general combinatorial vector
  fields. For instance in the following diagram, we have a non-orientable
  simplex as each vertex is the intersection of three different edges.  Here,
  the chain recurrent set has an orbit of index zero with  red arrows, as well as three critical edges and one critical vertex in the middle.  If we use theorem \ref{2} to compute the $\mathbb{Z}_2$ homology of $M$ based on this vector field we get $H_0(M,  \mathbb{Z}_2)=1$ and $H_1(M,  \mathbb{Z}_2)=3$.    	  

\begin{center}
 \includegraphics[width=3.5 cm]{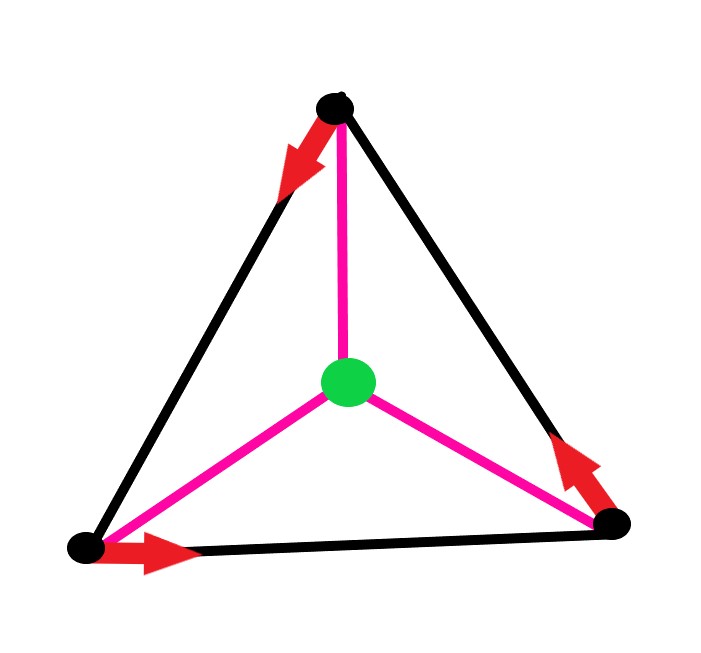} 
\end{center}
  	  
   \end{remark}   
        \subsection{Computing Homology Groups of Simplicial Complexes}
We now present the computation of Floer homology groups for some CW complexes.\\

1. Consider the tetrahedron as a symmetric triangulation of the Sphere $S ^ 2$, 
equipped with a vector field $V$ which has two rest simplices, one of index
(dimension) zero $(p_0)$ (the vertex at the top corner) and another of index
two $\tau_2$ (the simplex at the bottom), shown in orange, and a closed red orbit  $O_1$  of index one and one of index zero $O'_0$ in purple.  \\
 \begin{wrapfigure}{r}{0.3\textwidth}
\hspace{-0.1 cm}\vspace{0.4 cm} \includegraphics[width=3.5 cm]{th.jpg} 
\end{wrapfigure}

\begin{eqnarray*}
  C_2&=& \left( \tau_2, O^1_{1}\right)\\
  C_1&=& \left(  O^0_ {1}, O'^1_ {0}
         \right) \\
  C_0&=& \left( O'^0_ {0}, p_0 \right)
  \end{eqnarray*}

$\partial_2 \tau_2= O'^1_ {0}$ as this is an attachment where the faces of $\tau_2$ and edges in $ O^1_ 0$ coincide. On the other hand, $\partial_2 O^1_{1}$ is also equal to $O'^1_ {0}$ (another type of attachment) and therefore $\tau_2- O^1_{1} $ is the only generator for $H_2(M, \mathbb{Z}_2)$.\\
$\partial_1 O^0_ {1}=  \alpha ( O^0_ {1}, O'^0_ {0}) .O'^0_ {0} + \alpha ( O^0_ {1}, p_{0}) .p_{0} = O'^0_ {0} + p_{0} \neq 0 $ and $O^0_ {1}$ does not contribute to $H_1(M, \mathbb{Z}_2)$. Also $\partial_1 O'^1_ {0}= 0 $ but since $O'^1_ {0}$ is in the image of $\partial_2$, it does not contribute to $H_1(M, \mathbb{Z}_2)$ and $H_1(M, \mathbb{Z}_2)=0 $ 
Finally $\partial_0  p_0= 0=\partial_0 O'^0_ {0} $, but since $O'^0_ {0} + p_{0}$ is in the image of $\partial_1$  we have just one generator for  $H_0(M, \mathbb{Z}_2)$. 

 \begin{wrapfigure}{r}{0.3\textwidth}
\hspace{-0.01 cm}\vspace{-1 cm} \includegraphics[width=6 cm]{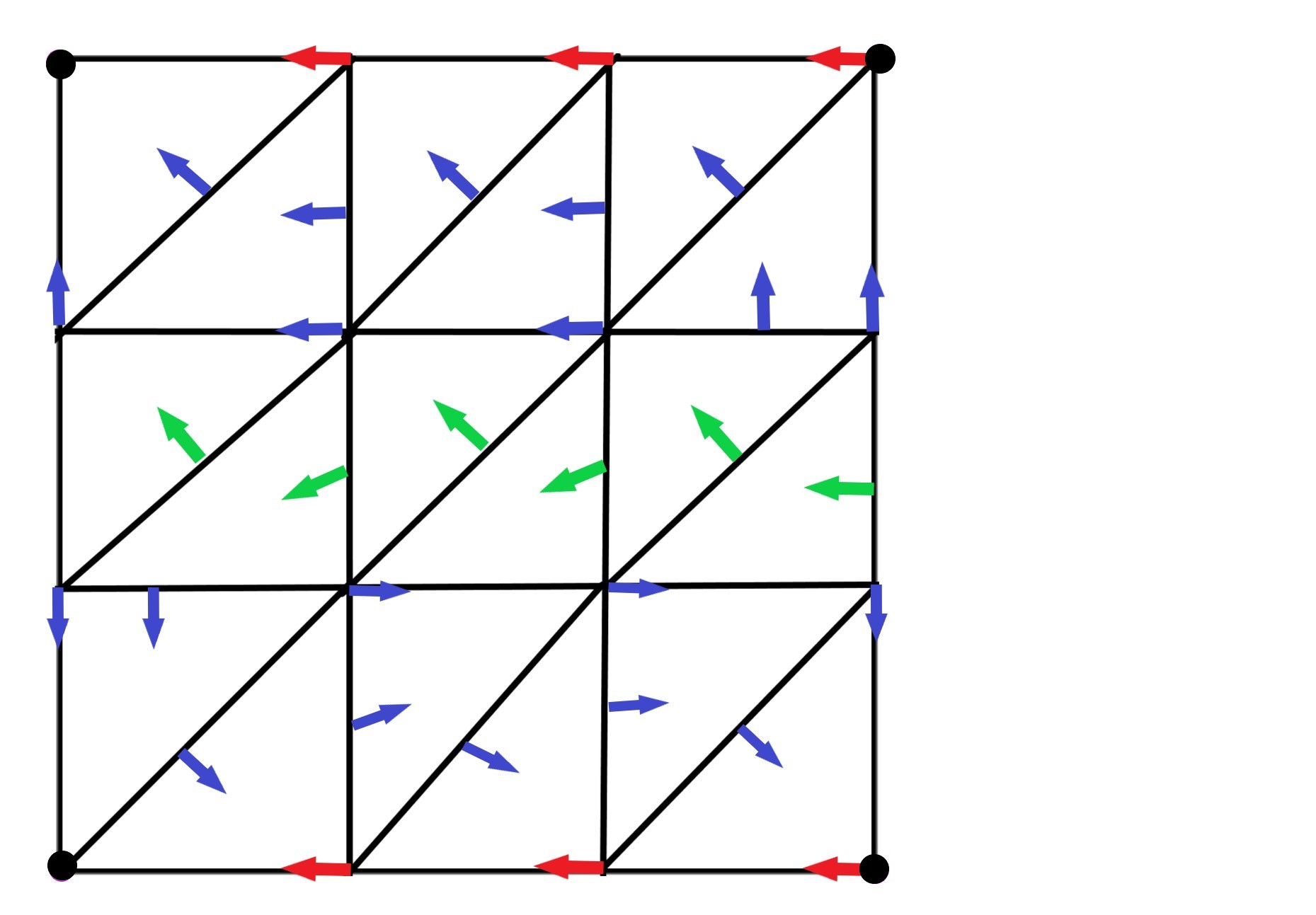}
\end{wrapfigure}
2. Let $T ^ 2$ at right be a triangulation of the two dimensional torus equipped with a  vector field which has two closed orbits $O_1$ and $O'_0$ with green and red arrows. \\
Since this case is actually a discrete version of example 4 in the previous section, we have analogous structures for the chain complexes and boundaries; $SVP(O^1_1 , O'^1_ 0) $ is partitioned into two equivalence classes and therefore  
 \begin{wrapfigure}{r}{0.3\textwidth}
\hspace{-0.01 cm}\vspace{0.1 cm} \includegraphics[width=6 cm]{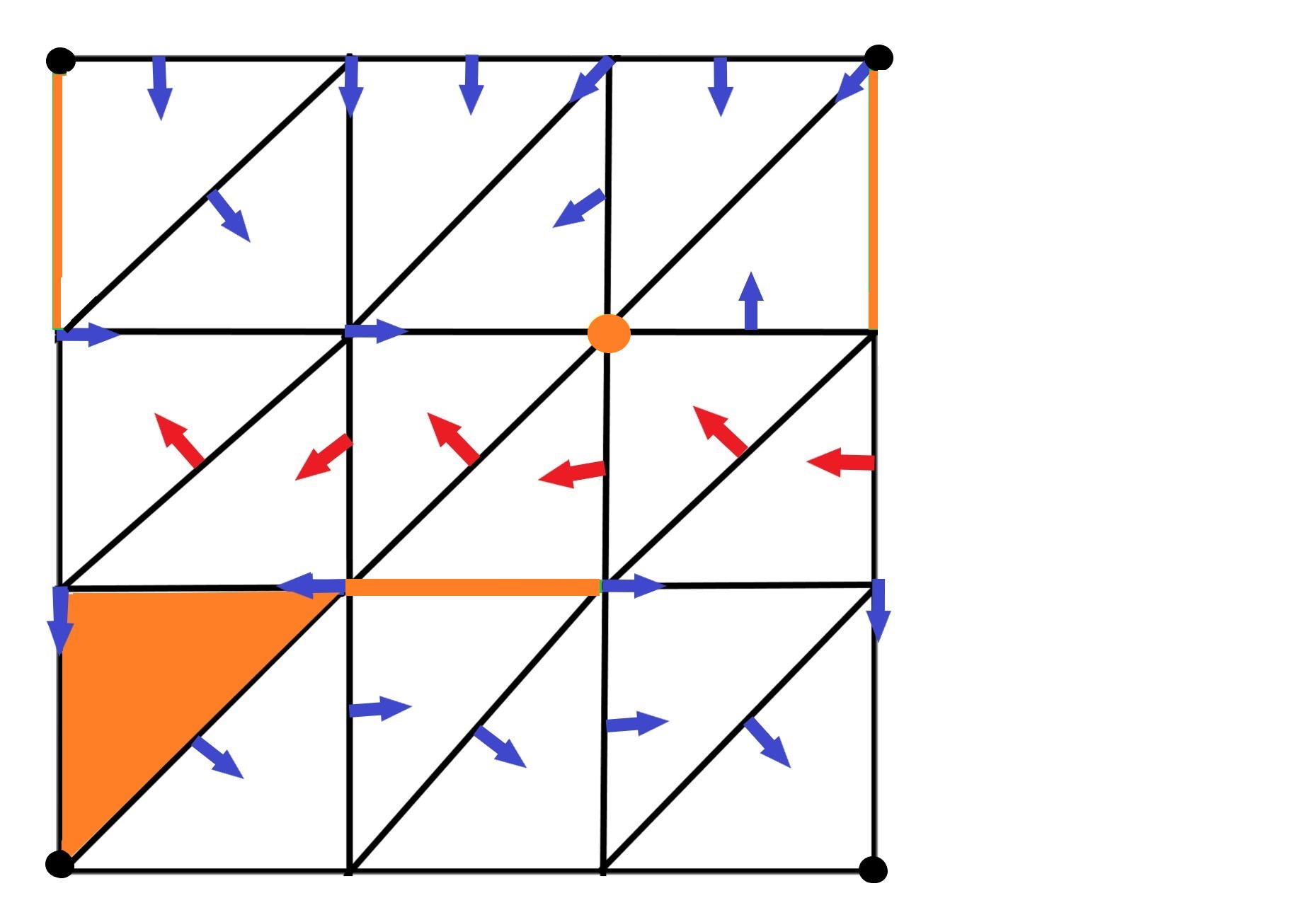} 
\end{wrapfigure} 
$\alpha ( O^1_1,  O'^1_0)= \alpha( O^0_1,  O^0_0)= 0  $ and all of the generators of $C_k$ for $k=0, 1, 2$ contribute to the corresponding homology groups.

3. Consider another combinatorial vector field on the triangulated
torus where $V$ has four orange rest simplices, one of
index zero $(p_0)$, a vertical edge $ve_1$ of index one, a horizontal edge $he_1$ of
index one and one rest simplex $\tau_2$  of index two and also a red orbit $O$ of index one. We have
\begin{eqnarray*}
  C_2&=& \left( \tau_2, O^1_{1}\right)\\
  C_1&=&   \left( ve_1,
         he_1,  O^0_ {1} \right) \\
  C_0&=& \left( p_o \right)
  \end{eqnarray*}

$\partial_2 \tau_2 = 1. ve_1 + 1. he_1 \neq 0$;  also $\partial_2 O^1_{1}=0 $ as there is no orbit of index zero here. Therefore $O^1_{1}$ is the only generator for $H_2(M,  \mathbb{Z}_2)$.
$\partial_1 ve_1 = 2. p_0 =0 =\partial_1 he_1 $ but since  $ve_1 + he_1$ is in the image of $\partial_2$,  $ve_1 - he_1$ is one generator for $H_1(M,  \mathbb{Z}_2)$. $\partial_1  O^0_ {1}= 2. p_0 =0 $ and therefore $O^0_ {1}$ is the other generator of $H_1(M,  \mathbb{Z}_2)$. $\partial_0  p_0= 0$ and it is the generator for $H_0(M,  \mathbb{Z}_2)$.\\\\
4. Finally to compute the Floer homology groups of the depicted cube,
 we consider a vector field $V$ that has two (orange and yellow) rest simplices
 of index two  at the top $\tau^N_2$ and at the bottom $\tau^S_2$ and three different orbits, one blue orbit $(bO)_0$ of index zero, one green orbit $(gO)_0$ of index zero and a red orbit $(rO)_1$ of index one.
 \begin{figure}[h!] 
 \centering
\includegraphics[width=4 cm]{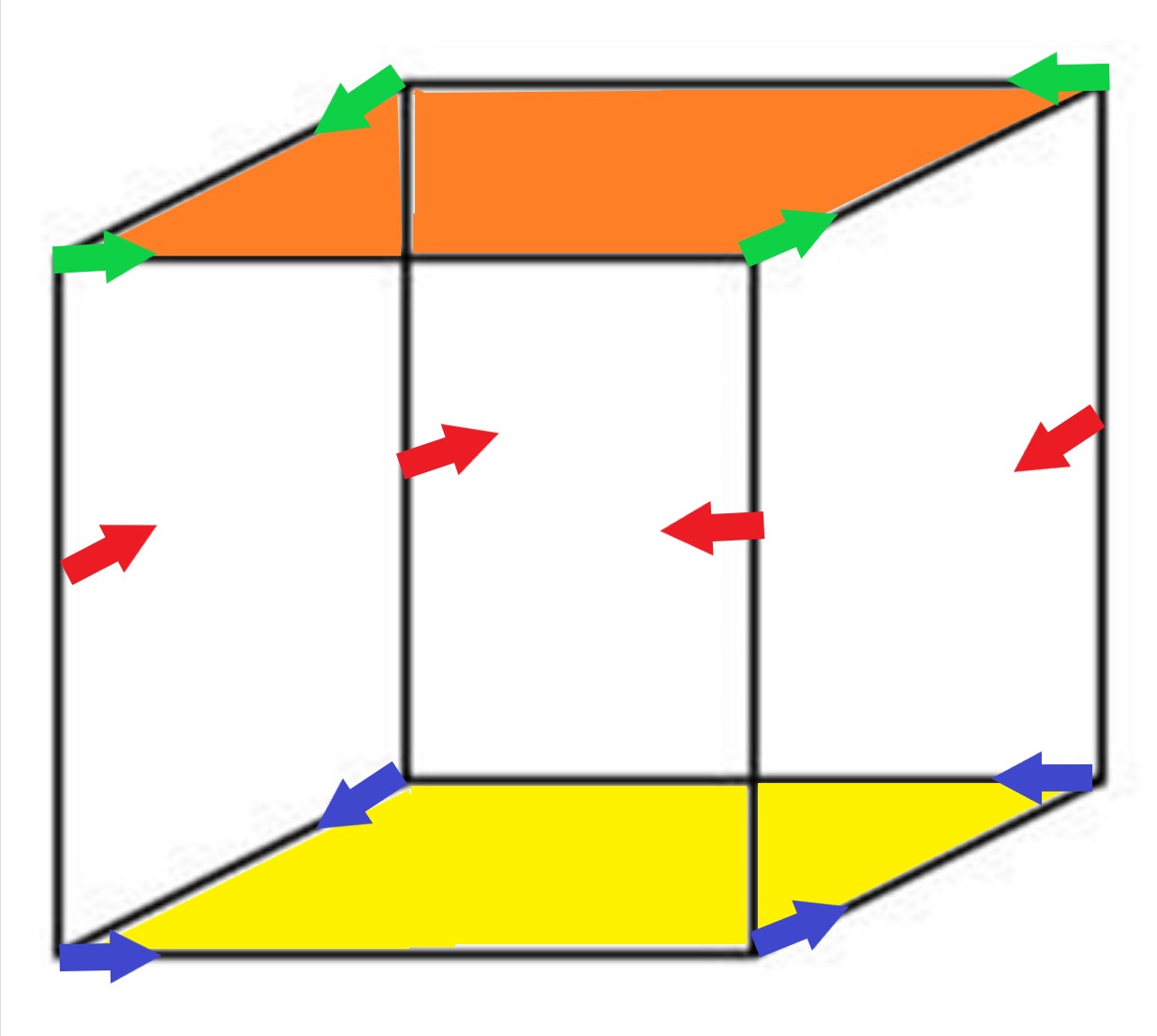} 
 \end{figure}
\begin{eqnarray*}
  C_2&=& \left(\tau^N_2, \tau^S_2,  (rO)^1_{1}\right) \\
  C_1&=& \left(  (rO)^0_ {1}, (bO)^1_ {0}, (gO)^1_ {0} \right) \\
  C_0&=& \left(  (bO)^0_ {0}, (gO)^0_ {0}  \right) 
\end{eqnarray*}
$\partial_2 \tau^N_2 = 1. (gO)^1_ {0} \neq 0$; $\partial_2 \tau^S_2 = 1. (bO)^1_ {0} \neq 0$; $\partial_2(rO)^1_{1}= 1. (gO)^1_ {0}  + 1. (bO)^1_ {0} \neq 0$  but $\partial_2 ( \tau^N_2 + \tau^S_2 - (rO)^1_{1})= 0 $ and therefore we have one generator for  $H_2(M,  \mathbb{Z}_2)$. \\ 
$\partial_1 (rO)^0_ {1}= 1. (bO)^0_ {0} + 1. (gO)^0_ {0} \neq 0$. $ \partial_1 (bO)^1_ {0} = 0= \partial_1 (gO)^1_ {0} $, but since both $(bO)^1_ {0}$ and $(gO)^1_ {0} $ are in the image of $\partial_2$, we have no generator for $H_1(M,  \mathbb{Z}_2)$. Finally  $\partial_0  (bO)^0_ {0}= 0 = \partial_0  (gO)^0_ {0} $, but as $(bO)^0_ {0} + (gO)^0_ {0}$ is in the image of  $ \partial_1$ we have one single generator for $H_0(M,  \mathbb{Z}_2)$.

\section{Acknowledgments}
The authors would like to thank Clemens Bannwart and Claudia Landi for pointing out a problem with the original formulation of  Theorem 2.9, as mentioned in  the Remark 2.11. The first author thanks C\'edric De Groote, Parvaneh Joharinad and Rostislav Matveev for their enlightening comments/questions, on the first draft, which helped to improve the manuscript.
\bibliographystyle{plain}
\bibliography{Morse-Floer}
\end {document}